\author{Julien Korinman}
\address{Institut Montpelli\'erain Alexander Grothendieck - UMR 5149 Universit\'e de Montpellier. Place Eug\'ene Bataillon, 34090 Montpellier France}
\email{julien.korinman@gmail.com}
\urladdr{https://sites.google.com/site/homepagejulienkorinman/}
\subjclass{$57$R$56$,  $57$M$25$.}
\keywords{Skein modules, character varieties, TQFTs}
\def\restriction#1#2{\mathchoice
              {\setbox1\hbox{${\displaystyle #1}_{\scriptstyle #2}$}
              \restrictionaux{#1}{#2}}
              {\setbox1\hbox{${\textstyle #1}_{\scriptstyle #2}$}
              \restrictionaux{#1}{#2}}
              {\setbox1\hbox{${\scriptstyle #1}_{\scriptscriptstyle #2}$}
              \restrictionaux{#1}{#2}}
              {\setbox1\hbox{${\scriptscriptstyle #1}_{\scriptscriptstyle #2}$}
              \restrictionaux{#1}{#2}}}
\def\restrictionaux#1#2{{#1\,\smash{\vrule height .8\ht1 depth .85\dp1}}_{\,#2}}
\newcommand{\quotient}[2]{{\raisebox{.2em}{$#1$}\left/\raisebox{-.2em}{$#2$}\right.}}
\newcommand{\sslash}{\mathbin{/\mkern-6mu/}}
\newcommand{\Hom}{\operatorname{Hom}}
\newcommand{\tr}{\operatorname{tr}}
\newcommand{\SL}{\operatorname{SL}}
\newcommand{\SU}{\operatorname{SU}}
\newcommand{\Span}{\operatorname{Span}}
\newcommand{\Specm}{\operatorname{MaxSpec}}
\newcommand{\Spec}{\operatorname{Spec}}
\newcommand{\Mod}{\operatorname{Mod}}
\newcommand{\Mat}{\operatorname{Mat}}
\newcommand{\Alg}{\operatorname{Alg}}
\newcommand{\heightexch}[3]{
	\begin{tikzpicture}[baseline=-0.4ex,scale=0.5, >=stealth]
	\draw [fill=gray!60,gray!45] (-.7,-.75)  rectangle (.4,.75)   ;
	\draw[#1] (0.4,-0.75) to (.4,.75);
	\draw[line width=1.2] (0.4,-0.3) to (-.7,-.3);
	\draw[line width=1.2] (0.4,0.3) to (-.7,.3);
	\draw (0.65,0.3) node {\scriptsize{$#2$}}; 
	\draw (0.65,-0.3) node {\scriptsize{$#3$}}; 
	\end{tikzpicture}
}
\newcommand{\heightcurve}{
\begin{tikzpicture}[baseline=-0.4ex,scale=0.5]
\draw [fill=gray!20,gray!45] (-.7,-.75)  rectangle (.4,.75)   ;
\draw[-] (0.4,-0.75) to (.4,.75);
\draw[line width=1.2] (-.7,-0.3) to (-.4,-.3);
\draw[line width=1.2] (-.7,0.3) to (-.4,.3);
\draw[line width=1.15] (-.4,0) ++(-90:.3) arc (-90:90:.3);
\end{tikzpicture}
}
\begin{document}

\theoremstyle{plain}
\newtheorem{theorem}{Theorem}[section]
\newtheorem{proposition}[theorem]{Proposition}
\newtheorem{corollary}[theorem]{Corollary}
\newtheorem{lemma}[theorem]{Lemma}
\theoremstyle{definition}
\newtheorem{notations}[theorem]{Notations}
\newtheorem{convention}[theorem]{Convention}
\newtheorem{problem}[theorem]{Problem}
\newtheorem{definition}[theorem]{Definition}
\theoremstyle{remark}
\newtheorem{remark}[theorem]{Remark}
\newtheorem{conjecture}[theorem]{Conjecture}
\newtheorem{example}[theorem]{Example}
\newtheorem{strategy}[theorem]{Strategy}
\newtheorem{question}[theorem]{Question}

\title[Skein modules of closed $3$-manifolds define line bundles over character varieties]{Skein modules of closed $3$-manifolds define line bundles over character varieties}

\date{}
\maketitle


\begin{abstract} 

Let $M$ be a closed $3$-manifold, $\zeta$ a root of unity of odd order and $\mathcal{S}_{\zeta}(M)$ the Kauffman-bracket skein module. Using the Frobenius morphism, we can see $\mathcal{S}_{\zeta}(M)$ as the space of global sections of a coherent sheaf over the $\SL_2$ character scheme of $M$. We prove that when the character scheme is reduced, this sheaf is a line bundle.
\end{abstract}


\section{Introduction}

Let $M$ be a closed connected $3$-manifold, $\zeta$ a root of unity of odd order $N\geq 3$ and $\mathcal{S}_{\zeta}(M)$ the Kauffman-bracket skein module. The \textit{Frobenius morphism} is a linear map $Fr: \mathcal{S}_{+1}(M)\to \mathcal{S}_{\zeta}(M)$, defined in \cite{BonahonWong1, LeKauffmanBracket}, whose image consists in "transparent" elements. This transparency property permits to endow $\mathcal{S}_{\zeta}(M)$ with a structure of $\mathcal{S}_{+1}(M)$-module. Let $X_{\SL_2}(M)$ be the $\SL_2$ character scheme of $M$. Fix a spin structure $S$ on $M$. Using \cite{Bullock,PS00, Barett}, we have an isomorphism  $\Psi^S: \mathcal{O}[X_{\SL_2}(M)] \xrightarrow{\cong} \mathcal{S}_{+1}(M)$ so $\mathcal{S}_{\zeta}(M)$ becomes a finitely generated module over $ \mathcal{O}[X_{\SL_2}(M)]$, thus defines a coherent sheaf $\mathscr{L}^M \to X_{\SL_2}(M)$. The main result of this paper is the following:

\begin{theorem}\label{main_theorem_intro} Suppose that $X_{\SL_2}(M)$  is reduced. Then the sheaf $\mathscr{L}^M \to X_{\SL_2}(M)$ is a line bundle.
\end{theorem}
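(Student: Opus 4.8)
To show that the coherent sheaf $\mathscr{L}^M$ is a line bundle, I need to prove it is locally free of rank $1$. Since $X_{\SL_2}(M)$ is assumed reduced (and an affine scheme of finite type), a coherent sheaf is locally free of rank $1$ precisely when all its fibers $\mathscr{L}^M \otimes_{\mathcal{O}[X]} \kappa(x)$, over closed points $x$, are $1$-dimensional — this is the standard "constant fiber dimension over a reduced scheme" criterion. So the whole problem reduces to computing the dimension of the fiber of $\mathscr{L}^M$ at an arbitrary closed point of the character scheme.

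**The key computation via TQFT / universal coefficients.** A closed point $x \in X_{\SL_2}(M)$ corresponds (generically) to a representation $\rho\colon \pi_1(M)\to \SL_2(\mathbb{C})$; the fiber is $\mathcal{S}_{\zeta}(M)\otimes_{\mathcal{O}[X_{\SL_2}(M)]}\mathbb{C}_\rho$, where $\mathbb{C}_\rho$ is the residue field with the module structure induced by evaluating a skein's "classical shadow" at $\rho$. The plan is to identify this fiber with an invariant attached to the reduction of the skein algebra / skein module at the root of unity — concretely, with the space obtained by specializing the Witten–Reshetikhin–Turaev type construction, or equivalently with $\mathcal{S}_{\zeta}(M)$ considered over the localization/completion at $x$. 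The cleanest route is: choose a Heegaard splitting $M = H_1 \cup_{\Sigma} H_2$, realize $\mathcal{S}_\zeta(M)$ as a coequalizer (or a suitable Tor/pairing) of the skein modules of the two handlebodies over the stated skein algebra $\mathcal{S}_\zeta(\Sigma)$, and use the known structure of $\mathcal{S}_\zeta(\Sigma)$ as an Azumaya algebra over the (smooth locus of the) character variety of $\Sigma$ — a result available from the work on the unicity/Azumaya property of stated skein algebras. Base-changing to the fiber, the Azumaya property forces the relevant $\operatorname{Tor}$ or pairing space to be $1$-dimensional, because an Azumaya algebra base-changed to a point is a matrix algebra $\Mat_n$, and the "handlebody modules" become, fiberwise, the unique (up to iso) simple module $\mathbb{C}^n$ on each side, whose balanced tensor product $\mathbb{C}^n \otimes_{\Mat_n} \mathbb{C}^n \cong \mathbb{C}$.

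**Handling the non-generic and reducible points.** The Azumaya locus of the stated skein algebra of $\Sigma$ is only the smooth part of its character variety, and the gluing maps land one in a more delicate situation at reducible representations, at points where the Heegaard surface character variety is singular, and at characters not coming from honest representations. To deal with these I would either (a) invoke the reducedness hypothesis to reduce to a dense open set and then use upper/lower semicontinuity of fiber dimension together with the already-known value $1$ on a dense open — upper semicontinuity gives $\dim \geq 1$ everywhere via the generic value being a lower bound for the rank of a locally free sheaf on the dense locus, and a separate argument (e.g. a surjection from a known rank-$1$ piece, or Nakayama plus the module being cyclic) gives $\dim \leq 1$; or (b) work integrally, showing $\mathscr{L}^M$ is torsion-free of generic rank $1$ on the reduced (possibly reducible) scheme $X_{\SL_2}(M)$ and that it is moreover reflexive / satisfies Serre's $S_2$, which on a scheme that is generically reduced and of the right dimension would upgrade "rank $1$ torsion-free" to "invertible" once we also know $\mathscr{L}^M$ is generated by one element locally. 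The cyclicity of $\mathcal{S}_\zeta(M)$ as an $\mathcal{S}_{+1}(M)$-module near a generic point — i.e. that the empty skein generates after localization — is what provides the upper bound; its proof again uses the Azumaya structure on the Heegaard surface.

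**Main obstacle.** The principal difficulty I anticipate is precisely the behavior at \emph{non-Azumaya points} of the Heegaard surface's stated skein algebra: reducible characters, and characters of $\pi_1(\Sigma)$ that do not extend to either handlebody. The Azumaya/matrix-algebra argument gives the clean "$\dim = 1$" only on the generic locus, and bridging from there to \emph{all} closed points is where the reducedness hypothesis must do real work. I expect the cleanest implementation is to prove the two inequalities $\dim_{\mathbb{C}} \mathscr{L}^M_x \leq 1$ (from cyclicity/Nakayama, valid at every point) and $\dim_{\mathbb{C}} \mathscr{L}^M_x \geq 1$ (from non-vanishing of $\mathscr{L}^M$ on each component, which follows because $\mathcal{S}_\zeta(M)$ surjects onto $\mathcal{S}_{+1}(M)/(\text{something})$ nontrivially, or because the RT-invariant at $\zeta$ is nonzero), and then invoke that a coherent sheaf on a reduced scheme with all fibers of dimension exactly $1$ is a line bundle. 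Making the lower bound airtight — ruling out that $\mathscr{L}^M$ vanishes identically at some exotic character — is the crux.
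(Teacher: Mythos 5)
Your generic-locus argument (Heegaard splitting, Azumaya property of $\mathcal{S}_{\zeta}(\Sigma_g)$, and the computation $\mathbb{C}^D\otimes_{\Mat_D(\mathbb{C})}\mathbb{C}^D\cong\mathbb{C}$) is exactly the paper's Theorem \ref{theorem_FKLKK}, and your suggestion to obtain the lower bound $\dim\mathcal{S}_{\zeta,[\rho]}(M)\geq 1$ from non-vanishing of the Reshetikhin--Turaev invariant is precisely the paper's Lemma \ref{lemma_preliminary}. The genuine gap is the upper bound $\dim\mathcal{S}_{\zeta,[\rho]}(M)\leq 1$ at \emph{central} representations, which you correctly identify as the crux but do not resolve. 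None of your proposed fallbacks works there: (i) semicontinuity from the dense Azumaya locus gives lower bounds, not upper bounds, and in any case says nothing when the class of a central representation is an isolated point of $X_{\SL_2}(M)$ (as happens, e.g., for homology spheres), since such a point need not lie in the closure of the non-central locus; (ii) cyclicity of $\mathcal{S}_{\zeta}(M)$ over $\mathcal{S}_{+1}(M)$ near such a point cannot be extracted from the Azumaya structure on $\Sigma_g$, precisely because central characters lie \emph{outside} the Azumaya locus of $\mathcal{S}_{\zeta}(\Sigma_g)$ (Theorem \ref{theorem_Azumaya}(2)); (iii) the reflexive/$S_2$ route collapses when the relevant irreducible component is zero-dimensional.

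The paper closes this gap by a mechanism absent from your proposal: pass to the marked manifold $M^*=(M\setminus\mathring{B}^3,\{a\})$ and its stated skein module. The Azumaya locus of the stated skein algebra $\mathcal{S}_{\zeta}(\Sigma_g^*)$ of the punctured Heegaard surface is $\mu^{-1}(\SL_2^{(0)})$, which contains \emph{all} of $R_{\SL_2}(\Sigma_g)=\mu^{-1}(\mathds{1}_2)$, central representations included; the same matrix-algebra computation then gives $\dim\mathcal{S}_{\zeta,\rho}(M^*)=1$ for \emph{every} $\rho$ (Theorem \ref{theorem_Ibundle}). Two further nontrivial inputs transfer this back to $M$: the injectivity of $\iota:\mathcal{S}_{\zeta}(M)\to\mathcal{S}_{\zeta}(M^*)$ (Proposition \ref{prop_injection}, proved via a filtration argument on the Heegaard presentation), and the Reynolds-operator argument (Lemmas \ref{lemma_Reynolds} and \ref{lemma_Reynolds2}) showing the injection survives reduction at $\rho$, whence $\dim\mathcal{S}_{\zeta,[\rho]}(M)\leq\dim\mathcal{S}_{\zeta,\rho}(M^*)=1$. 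Without a substitute for this step your argument does not close.
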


This sheaf was first considered in \cite{FKL_GeometricSkein} where its is proved, under the  assumption that $X_{\SL_2}(M)$ is reduced, that the restriction of $\mathscr{L}^M$ to the locus of classes $[\rho]$ of irreducible representations $\rho: \pi_1(M)\to \SL_2$ is a line bundle. 
This result was strengthen in \cite{FFK_SkeinIrrep} where the same result is proved without the reduced assumption.
It was then proved in \cite{KojuKaruo_Azumaya} that the restriction of $\mathscr{L}^M$ to the locus of classes of diagonal non central representations is a line bundle as well. So the only remaining case is to prove that $\mathscr{L}^M$ is a line bundle in the neighborhood of central representations,  i.e. for classes of representations $\rho: \pi_1(M) \to \SL_2$ which satisfy $\rho(\gamma)=\pm \mathds{1}$ for all $\gamma \in \pi_1(M)$.  Based on the original work in \cite{DetcherryKalfagianniSikora_SkeinDim}, it was proved in \cite{KojuKaruo_Azumaya} that Theorem \ref{main_theorem_intro} holds when $X_{\SL_2}(M)$ is finite, reduced and when the skein module $\mathcal{S}_A(M)$ for $A$ a generic parameter in $\mathbb{Q}[A^{\pm 1}]$ satisfies an additional "tame" property which is very hard to check in practice. 
\vspace{2mm}
\par We will consider another line bundle based this time on stated skein modules. Let $M^*$ be the marked $3$-manifold $M^*=(M\setminus \mathring{B}^3, \{a\})$ obtained by removing from $M$ an open ball and placing one marking on the boundary of this ball. Let $\mathcal{S}_{\zeta}(M^*)$ be the stated skein module of $M^*$. By \cite{KojuQuesneyClassicalShadows, BloomquistLe} we still have a Chebyshev-Frobenius morphism $Fr_*: \mathcal{S}_{+1}(M^*) \to \mathcal{S}_{\zeta}(M^*)$ which endows $\mathcal{S}_{\zeta}(M^*)$ with a structure of $\mathcal{S}_{+1}(M^*)$ module. By \cite{KojuMurakami_QCharVar}, the spin structure $S$ defines an isomorphism $\Psi^{S}_*: \mathcal{O}[R_{\SL_2}(M)] \xrightarrow{\cong} \mathcal{S}_{+1}(M^*)$ where $R_{\SL_2}(M):= \Hom(\pi_1(M), \SL_2)$ is the representation scheme of $M$. Therefore $\mathcal{S}_{\zeta}(M^*)$ defines a coherent sheaf $\mathscr{I}^{M} \to R_{\SL_2}(M)$. 
Let $\pi: R_{\SL_2}(M) \to X_{\SL_2}(M)$ be the projection map and $\pi' : \pi^*\mathscr{L}^M \to \mathscr{I}^{M}$ the bundle morphism defined by the inclusion morphism $\mathcal{S}_{\zeta}(M)\to \mathcal{S}_{\zeta}(M^*)$. 

\begin{theorem}\label{theorem2_intro}
\begin{enumerate}
\item When $A\in k^{\times}$ is an invertible element of an arbitrary commutative unital ring $k$,  the inclusion morphism $\mathcal{S}_A(M) \to \mathcal{S}_A(M^*)$ is injective.
\item Suppose that the representation scheme $R_{\SL_2}(M)$ is reduced.
 Then the sheaf $\mathscr{I}^{M} \to R_{\SL_2}(M)$ is a line bundle
 .
 \end{enumerate}
 \end{theorem}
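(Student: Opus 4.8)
The plan is to prove the two parts of Theorem~\ref{theorem2_intro} by rather different mechanisms, then observe that the second part also feeds back into Theorem~\ref{main_theorem_intro}. For part (1), I would argue that $\mathcal{S}_A(M^*)$ carries a natural $\mathbb{Z}$-grading (or at least a filtration) by the number of endpoints on the marking $a$, whose degree-zero part is exactly the image of $\mathcal{S}_A(M)$; concretely, a stated skein with no endpoints on $a$ is the same as an unmarked skein in $M\setminus\mathring{B}^3\simeq M$ minus a point, and since $M$ is closed the removed ball is inessential so $\mathcal{S}_A(M\setminus\mathring{B}^3)\cong\mathcal{S}_A(M)$. The point is that the defining relations of the stated skein module (height exchange, the state-sum relations at the marking, and the boundary relations) are homogeneous for the endpoint count modulo $2$ and cannot lower a positive endpoint count to zero except through the cap relation, which produces a scalar; so one shows the obvious spanning map $\mathcal{S}_A(M)\to\mathcal{S}_A(M^*)$ is split injective by projecting onto the endpoint-degree-zero summand. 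An alternative, perhaps cleaner, route is to use the splitting/gluing formula for stated skein modules: $M^*$ glued to a ball with one marking along $a$ recovers $M$, and one extracts a retraction from the functoriality of $\mathcal{S}_A$ under gluing together with the known value $\mathcal{S}_A$ of the marked ball. Either way this part should be routine once the right grading is set up.

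For part (2), the strategy parallels the proof of Theorem~\ref{main_theorem_intro} (and of the earlier results in \cite{FKL_GeometricSkein, FFK_SkeinIrrep, KojuKaruo_Azumaya}) but on the representation scheme rather than the character scheme, where life is easier because $R_{\SL_2}(M)$ has no stacky behaviour and the action is by a much larger ``gauge'' algebra. The key inputs are: (i) the Frobenius morphism $Fr_*$ makes $\mathcal{S}_{\zeta}(M^*)$ a finitely generated $\mathcal{O}[R_{\SL_2}(M)]$-module (so $\mathscr{I}^M$ is coherent, as stated); (ii) the fibre of $\mathscr{I}^M$ over a representation $\rho$ can be computed as $\mathcal{S}_{\zeta}(M^*)\otimes_{\mathcal{O}[R_{\SL_2}(M)]}k_{\rho}$, and one wants this to be $1$-dimensional for every $\rho$. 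To get the fibre dimension I would use an Azumaya/quantum-Frobenius argument at the level of the stated skein algebra of a Heegaard surface: realize $M^*$ from a Heegaard splitting, so that $\mathcal{S}_{\zeta}(M^*)$ is a quotient (by the two handlebody ideals) of the stated skein algebra $\mathcal{S}_{\zeta}(\Sigma)$ of the Heegaard surface, which by \cite{KojuMurakami_QCharVar} (combined with the Azumaya locus results) is Azumaya of rank $N^{2(\dim)}$ over its center $\mathcal{O}[R_{\SL_2}(\Sigma)]$ — the crucial gain over the unmarked case is that on the representation-variety side \emph{every} point lies in the Azumaya locus, including the central representations that were the obstruction before. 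Then a PBW-type / deformation-quantization count shows the handlebody ideals cut the rank down to exactly $1$ fibrewise.

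The main obstacle — and the place I'd spend the most care — is exactly the passage from ``$\mathscr{I}^M$ is a coherent sheaf whose fibres are generically $1$-dimensional'' to ``every fibre is $1$-dimensional''. Upper semicontinuity of fibre dimension gives $\dim\mathscr{I}^M\otimes k_\rho\geq 1$ everywhere once we know it is generically $1$; for the reverse inequality at the bad points (central $\rho$) I would run the handlebody/Heegaard computation above, using that the stated skein algebra of $\Sigma$ is a free module of the expected rank over $\mathcal{O}[R_{\SL_2}(\Sigma)]$ (a statement with no reducedness hypothesis, which is why the reducedness of $R_{\SL_2}(M)$ only enters at the very end) and that reducedness of $R_{\SL_2}(M)$ lets us conclude a coherent sheaf with constant fibre dimension $1$ is locally free of rank $1$. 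A subtlety to watch: the two ideals coming from the two handlebodies must be shown to be generated by ``transparent'' (Frobenius-image) elements so that quotienting commutes with base change to $k_\rho$; this is where the transparency property of $\mathrm{Im}(Fr_*)$ and the compatibility of $Fr_*$ with gluing, from \cite{KojuQuesneyClassicalShadows, BloomquistLe}, do the real work. Finally, part (1) together with part (2) and the fact that $\pi:R_{\SL_2}(M)\to X_{\SL_2}(M)$ is a good quotient lets one identify $\mathscr{L}^M$ with the $\SL_2$-invariant pushforward of $\mathscr{I}^M$, recovering Theorem~\ref{main_theorem_intro} when $X_{\SL_2}(M)$ — equivalently, in the relevant cases, $R_{\SL_2}(M)$ — is reduced.
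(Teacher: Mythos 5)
Your treatment of part (2) is essentially the paper's argument: Heegaard splitting, the gluing isomorphism $\mathcal{S}_{\zeta}(M^*)\cong \mathcal{S}_A(H_g^{*,1})\otimes_{\mathcal{S}_A(\Sigma_g^*)}\mathcal{S}_A(H_g^{*,2})$ specialized at each $\rho$, the Azumaya property of $\mathcal{S}_{\zeta}(\Sigma_g^*)$ at the relevant points, a rank count for the handlebody modules, and the Hartshorne lemma that a coherent sheaf with constant fibre dimension $1$ on a reduced Noetherian scheme is a line bundle. One correction: it is not true that \emph{every} point of $R_{\SL_2}(\Sigma_{g,1})$ lies in the Azumaya locus of $\mathcal{S}_{\zeta}(\Sigma_g^*)$ — the locus is $\mu^{-1}(\SL_2^{(0)})$ — but the points arising from a closed $3$-manifold satisfy $\mu(\rho_\Sigma)=\mathds{1}_2\in\SL_2^{(0)}$, which is exactly why the argument goes through; and the ``PBW-type count'' you defer to is, in the paper, the concrete statement that $\mathcal{S}_{\zeta}(H_g^*)$ is free of rank $N^{3g}$ over its classical center, so each fibre is the standard $\Mat_{N^{3g}}(\mathbb{C})$-module and the tensor product is $\mathbb{C}^D\otimes_{\Mat_D(\mathbb{C})}\mathbb{C}^D\cong\mathbb{C}$.

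Part (1) is where you have a genuine gap. Your main route assumes that the endpoint filtration on $\mathcal{S}_A(M^*)$ is split in degree zero, i.e.\ that there is a well-defined projection onto the span of tangles with no endpoints and that this span is isomorphic to $\mathcal{S}_A(M)$. That is essentially the statement to be proven, not an input: the skein relations are indeed filtered, but nothing prevents a linear combination of positive-degree stated tangles from being forced, by relations in the $3$-manifold, to equal a nonzero degree-zero element, thereby imposing new relations on closed tangles. The Costantino--L\^e example quoted in the paper — a marked $3$-manifold with two boundary edges separated by a sphere, where the empty link has torsion and the inclusion fails to be injective — shows that any argument based only on homogeneity of the defining relations cannot work; the proof must use the specific geometry of $M^*$. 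Your alternative route (a retraction from gluing a marked ball back in) also does not exist as stated: the splitting morphism goes from the glued manifold to the cut one, and there is no embedding of marked $3$-manifolds $M^*\to (M,\emptyset)$ since the marking has nowhere to land. The paper's actual proof routes everything through the Heegaard splitting: it identifies $\mathcal{S}_A(M)\cong M_1^0\otimes_{\mathcal{A}^0}M_2^0$ and $\mathcal{S}_A(M^*)\cong M_1\otimes_{\mathcal{A}}M_2$ where $\mathcal{A}=\mathcal{S}_A(\Sigma_g^*)$ and $M_i=\mathcal{S}_A(H_g^{*,i})$, uses L\^e's theorem that the endpoint filtration is well behaved for marked \emph{surfaces} (where the graded map is an isomorphism), and — crucially — uses that $M_1$ and $M_2$ are cyclic $\mathcal{A}$-modules generated by the empty link, so that $M_i\cong \mathcal{A}/I_i$ compatibly with the filtrations; this is what lets one show that the relation submodule $J$ meets $M_1^0\otimes_k M_2^0$ exactly in $J_0$. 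Without some such structural input your degree-zero projection is not defined.
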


 The fact that $\iota: \mathcal{S}_A(M) \to \mathcal{S}_A(M^*)$ is injective contrasts with the fact, proved in \cite{CostantinoLe_SSkeinTQFT}, that when $\mathcal{M}= (M, \mathcal{N})$ is a marked $3$-manifold having a connected component containing two boundary edges separated by a separating embedded sphere, then the class of the empty link in $\mathcal{S}_A(\mathcal{M})$ has torsion so the inclusion $\iota: \mathcal{S}_A(M) \to \mathcal{S}_A(\mathcal{N})$ is not injective in this case.

\textit{Acknowledgments.} The author thanks D.Calaque, F.Costantino,  C.Frohman,  B.Haioun and T.Q.T. L\^e for valuable conversations. He acknowledges support from  the European Research Council (ERC DerSympApp) under the European Union’s Horizon 2020 research and innovation program (Grant Agreement No. 768679).

\section{Stated skein modules and coherent sheaves}

\subsection{Stated skein modules}

Let us briefly review the definition and main properties of stated skein modules. A \textbf{marked }$3$\textbf{ manifold} is a pair $(M, \mathcal{N})$ where $M$ is a compact oriented $3$-manifold and $\mathcal{N}\subset \partial M$ is a finite set of pairwise disjoint embedded oriented arcs, named \textit{boundary edges}, in the boundary of $M$. An \textit{embedding of marked }$3$ \textit{ manifolds} $f: (M, \mathcal{N}) \to (M', \mathcal{N}')$ is an oriented embedding $f: M\to M'$ which induces an oriented embedding $f: \mathcal{N} \to \mathcal{N}'$. Marked $3$-manifolds with embeddings form a category $\mathcal{M}$. 
\par A \textit{tangle} $T$ in $(M, \mathcal{N})$ is a framed $1$-submanifold $T\subset M$ with $\partial T \subset \mathcal{N}$ such that if $v\in \partial T$ lies in a boundary edge $a\in \mathcal{N}$, then the framing of $T$ at $v$ points in the direction of $a$. A \textit{state} on $T$ is a map $s: \partial T \to \{\pm \}$ and a \textit{stated tangle} is a pair $(T,s)$. 

\begin{definition}(Stated skein modules) Let $k$ be a commutative unital ring and $A^{1/2}\in k^{*}$ an invertible element, whose square will be denoted by $A$. 
 The \textbf{stated skein module} $\mathcal{S}_A(M, \mathcal{N})$ is the quotient of the $k$-module freely generated by isotopy classes of stated tangles in $(M, \mathcal{N})$ by the following skein relations: 
$$
\begin{tikzpicture}[baseline=-0.4ex,scale=0.5,>=stealth]	
\draw [fill=gray!45,gray!45] (-.6,-.6)  rectangle (.6,.6)   ;
\draw[line width=1.2,-] (-0.4,-0.52) -- (.4,.53);
\draw[line width=1.2,-] (0.4,-0.52) -- (0.1,-0.12);
\draw[line width=1.2,-] (-0.1,0.12) -- (-.4,.53);
\end{tikzpicture}
=A
\begin{tikzpicture}[baseline=-0.4ex,scale=0.5,>=stealth] 
\draw [fill=gray!45,gray!45] (-.6,-.6)  rectangle (.6,.6)   ;
\draw[line width=1.2] (-0.4,-0.52) ..controls +(.3,.5).. (-.4,.53);
\draw[line width=1.2] (0.4,-0.52) ..controls +(-.3,.5).. (.4,.53);
\end{tikzpicture}
+A^{-1}
\begin{tikzpicture}[baseline=-0.4ex,scale=0.5,rotate=90]	
\draw [fill=gray!45,gray!45] (-.6,-.6)  rectangle (.6,.6)   ;
\draw[line width=1.2] (-0.4,-0.52) ..controls +(.3,.5).. (-.4,.53);
\draw[line width=1.2] (0.4,-0.52) ..controls +(-.3,.5).. (.4,.53);
\end{tikzpicture}
\hspace{.5cm}
\text{ and }\hspace{.5cm}
\begin{tikzpicture}[baseline=-0.4ex,scale=0.5,rotate=90] 
\draw [fill=gray!45,gray!45] (-.6,-.6)  rectangle (.6,.6)   ;
\draw[line width=1.2,black] (0,0)  circle (.4)   ;
\end{tikzpicture}
= -(A^2+A^{-2}) 
\begin{tikzpicture}[baseline=-0.4ex,scale=0.5,rotate=90] 
\draw [fill=gray!45,gray!45] (-.6,-.6)  rectangle (.6,.6)   ;
\end{tikzpicture}
$$

\begin{equation*} 
\begin{tikzpicture}[baseline=-0.4ex,scale=0.5,>=stealth]
\draw [fill=gray!45,gray!45] (-.7,-.75)  rectangle (.4,.75)   ;
\draw[->] (0.4,-0.75) to (.4,.75);
\draw[line width=1.2] (0.4,-0.3) to (0,-.3);
\draw[line width=1.2] (0.4,0.3) to (0,.3);
\draw[line width=1.1] (0,0) ++(90:.3) arc (90:270:.3);
\draw (0.65,0.3) node {\scriptsize{$+$}}; 
\draw (0.65,-0.3) node {\scriptsize{$+$}}; 
\end{tikzpicture}
=
\begin{tikzpicture}[baseline=-0.4ex,scale=0.5,>=stealth]
\draw [fill=gray!45,gray!45] (-.7,-.75)  rectangle (.4,.75)   ;
\draw[->] (0.4,-0.75) to (.4,.75);
\draw[line width=1.2] (0.4,-0.3) to (0,-.3);
\draw[line width=1.2] (0.4,0.3) to (0,.3);
\draw[line width=1.1] (0,0) ++(90:.3) arc (90:270:.3);
\draw (0.65,0.3) node {\scriptsize{$-$}}; 
\draw (0.65,-0.3) node {\scriptsize{$-$}}; 
\end{tikzpicture}
=0,
\hspace{.2cm}
\begin{tikzpicture}[baseline=-0.4ex,scale=0.5,>=stealth]
\draw [fill=gray!45,gray!45] (-.7,-.75)  rectangle (.4,.75)   ;
\draw[->] (0.4,-0.75) to (.4,.75);
\draw[line width=1.2] (0.4,-0.3) to (0,-.3);
\draw[line width=1.2] (0.4,0.3) to (0,.3);
\draw[line width=1.1] (0,0) ++(90:.3) arc (90:270:.3);
\draw (0.65,0.3) node {\scriptsize{$+$}}; 
\draw (0.65,-0.3) node {\scriptsize{$-$}}; 
\end{tikzpicture}
=A^{-1/2}
\begin{tikzpicture}[baseline=-0.4ex,scale=0.5,>=stealth]
\draw [fill=gray!45,gray!45] (-.7,-.75)  rectangle (.4,.75)   ;
\draw[-] (0.4,-0.75) to (.4,.75);
\end{tikzpicture}
\hspace{.1cm} \text{ and }
\hspace{.1cm}
A^{1/2}
\heightexch{->}{-}{+}
- A^{5/2}
\heightexch{->}{+}{-}
=
\heightcurve.
\end{equation*}
 \end{definition}
An embedding  $f: (M, \mathcal{N}) \to (M', \mathcal{N}')$ induces a linear map $f_*: \mathcal{S}_A(M, \mathcal{N}) \to \mathcal{S}_A(M', \mathcal{N}')$ defined by $f_* ([T,s]):= [f(T), s\circ f^{-1}]$ and we get a symmetric monoidal functor $\mathcal{S}_A: (\mathcal{M}, \sqcup) \to (\Mod_k, \otimes)$ since $\mathcal{S}_A(M_1 \sqcup M_2)\cong \mathcal{S}_A(M_1)\otimes_k \mathcal{S}_A(M_2)$. 
\par A \textbf{marked surface} is a pair $\mathbf{\Sigma}=(\Sigma, \mathcal{A})$ where $\Sigma$ is a compact oriented surface and $\mathcal{A}\subset \partial \Sigma$ a finite set of points in the boundary of $\Sigma$. It defines a marked $3$-manifold $\mathbf{\Sigma}\times [0,1]$ whose underlying $3$-manifold is $\Sigma\times [0,1]$ with boundary edges the arcs $p\times [0,1]$ for $p\in \mathcal{A}$, oriented from $0$ towards $1$. We will abusively denote $\mathbf{\Sigma}\times [0,1]$ by $\mathbf{\Sigma}$ so write $\mathcal{S}_A(\mathbf{\Sigma})$ for $\mathcal{S}_A(\mathbf{\Sigma}\times [0,1])$. Since $\mathbf{\Sigma}$ has a natural structure of algebra object in $\mathcal{M}$, then $\mathcal{S}_A(\mathbf{\Sigma})$ has a natural structure of (associative, unital) algebra where the product is defined by stacking stated tangles in the $[0,1]$ direction. 
\vspace{2mm}
\par Let $\mathbf{M}=(M, \mathcal{N})$ be a marked $3$-manifold and $a, b \in \mathcal{N}$  two (distinct) boundary edges and $D_a, D_b \subset \partial M$ two embedded discs with diameters $a$ and $b$. We denote by $\mathbf{M}_{a\# b}=(M_{a\# b}, \mathcal{N}\setminus \{a, b\})$ the marked $3$-manifold obtained by gluing $D_a$ and $D_b$ together. In \cite{LeStatedSkein, BloomquistLe} the authors introduced the \textbf{splitting morphism} 
$$ \theta_{a\# b} : \mathcal{S}_A(\mathbf{M}_{a\# b}) \to \mathcal{S}_A(\mathbf{M}).$$
When $\mathbf{M}=\mathbf{\Sigma}\times [0,1]$ is a thickened surface, $\theta_{a\#b}$ is an injective morphism of algebras. The splitting is coassociative in the sense that if $a,b,c,d$ are four pairwise distinct boundary edges, then $\theta_{a\# b}\circ \theta_{c \#d}=\theta_{c \#d} \circ  \theta_{a\# b}$. 
We refer to  \cite{LeStatedSkein, BloomquistLe}  for the precise definition and more details.
\par The \textbf{bigon} $\mathbb{B}=(D^2, \{p_L, p_R\})$ is a disc with two boundary points $p_L, p_R$, so $\mathbb{B}\times [0,1]$ is a ball with two boundary edges $a_L, a_R$. By gluing two thickened bigons $\mathbb{B}$ and $\mathbb{B}'$ by gluing $D_{a_R}$ with $D_{a'_L}$ together we get another bigon, therefore the splitting morphism $\Delta:= \theta_{a_R\# a'_L} : \mathcal{S}_A(\mathbb{B})^{\otimes 2} \to \mathcal{S}_A(\mathbb{B})$ endows $\mathcal{S}_A(\mathbb{B})$ with a bialgebra structure.
 It was proved independently in \cite{KojuQuesneyClassicalShadows} and \cite{CostantinoLe19} that $\mathcal{S}_A(\mathbb{B})$ is isomorphic to the quantum group $\mathcal{O}_q\SL_2$ where $q:= A^2$. 
 \par Let $b$ be a boundary edge of a marked $3$-manifold $\mathbf{M}=(M, \mathcal{N})$. By gluing $\mathbf{M}$ with a thickened bigon $\mathbb{B}\times [0,1]$ while gluing $D_{b}\subset \partial M$ with $D_{a_L}\subset \partial \mathbb{B}\times [0,1]$ we get another copy of $\mathbf{M}$ therefore the splitting morphism $$\Delta_b^R: =\theta_{b \# a_L} : \mathcal{S}_A(\mathbf{M}) \to \mathcal{S}_A(\mathbf{M})\otimes  \mathcal{O}_q\SL_2 $$ endows $\mathcal{S}_A(\mathbf{M})$ with a structure of right $\mathcal{O}_q\SL_2$ comodule.

\subsection{Classical limits and moduli spaces}\label{sec_charvar}

Let $M$ be a connected closed $3$-manifold. We denote by $M^*=(M\setminus \mathring{B}, \{a\})$ the marked $3$-manifold obtained by removing from $M$ the interior of a ball $B\subset M$ and placing a boundary edge $a$ in the boundary of $B$. Let $v\in a \subset M$ be a based point and denote the fundamental group $\pi_1(M,v)$ simply by $\pi_1(M)$. The \textbf{representation scheme} $R_{\SL_2}(M)$ is the affine scheme with ring of regular functions
$$ \mathcal{O}[R_{\SL_2}(M)]:= 
\quotient{\mathbb{C}[X^{\gamma}_{ij}, i,j\in \{+, -\}, \gamma \in \pi_1(M)]}{(M_{\alpha}M_{\beta}=M_{\alpha\beta}, \det(M_{\alpha})=1, \alpha, \beta\in \pi_1(M))}, $$
where $M_{\gamma}=\begin{pmatrix} X^{\gamma}_{++} & X^{\gamma}_{+-} \\ X^{\gamma}_{-+} & X^{\gamma}_{--} \end{pmatrix}$ is a $2\times 2$ matrix with coefficients in the polynomial ring $\mathbb{C}[X^{\gamma}_{ij}]$ (so the relation $M_{\alpha}M_{\beta}=M_{\alpha\beta}$ represents in fact four relations). Set $\mathcal{O}[\SL_2]:= \quotient{\mathbb{C}[x_{ij}, i,j\in \{-, +\}]}{(x_{++}x_{--}-x_{+-}x_{-+}-1)}$.
The set of characters $\Hom_{\Alg}\left(\mathcal{O}[{R}_{\SL_2}(M)], \mathbb{C}\right)$ (i.e. the closed points of $R_{\SL_2}(M)$) is in bijection with the set of representations $\rho: \pi_1(M) \to \SL_2(\mathbb{C})$. The group $\SL_2(\mathbb{C})$ acts (on the left) by conjugacy on the set of representations by the formula: 
$$ (g\cdot \rho) (\gamma):= g \rho(\gamma)g^{-1} , \quad \mbox{ for all }g\in \SL_2, \gamma \in \pi_1(M).$$
The above action is algebraic, i.e. induced by a comodule map $\Delta^R: \mathcal{O}[\mathcal{R}_{\SL_2}(M)] \to \mathcal{O}[\mathcal{R}_{\SL_2}(M)] \otimes \mathcal{O}[\SL_2]$ defined by the formula:
$$ \Delta^R (X_{ij}^{\gamma}):= \sum_{a,b= \pm}  X_{ab}^{\gamma} \otimes x_{i a}S(x_{bj}) .$$
The \textbf{character scheme} $X_{\SL_2}(M)$ is the affine scheme whose ring of regular functions $\mathcal{O}[X_{\SL_2}(M)] \subset \mathcal{O}[R_{\SL_2}(M)]$ is the subalgebra of coinvariant vectors; said differently $X_{\SL_2}(M):=R_{\SL_2}(M) \sslash \SL_2$. 
\par Let $\gamma \in \pi_1(M)$. We denote by $\tau_{\gamma}\in \mathcal{O}[X_{\SL_2}(M)]$ be the regular function defined by $\tau_{\gamma}([\rho]):=\tr(\rho(\gamma))$, i.e. $\tau_{\gamma}= X^{\gamma}_{++} + X^{\gamma}_{--}$. 
In $\mathcal{S}_{+1}(M)$ and $\mathcal{S}_{+1}(M^*)$ given two stated tangles $T_1, T_2$, the class of $T_1\cup T_2$ does not depend on how the two tangles are entangled so the formula $[T_1]\cdot [T_2]:= [T_1 \cup T_2]$ defines a product which endows $\mathcal{S}_{+1}(M)$ and $\mathcal{S}_{+1}(M^*)$ with  structures of commutative algebras. Moreover the class $[T]$ of a stated tangle $T$ does not depend on the framing of $T$. 
For $\gamma \in \pi_1(M)$, denote by $[\gamma]\in \mathcal{S}_{+1}(M)$ the class of a simple closed curve isotopic to $\gamma$. For $\varepsilon, \varepsilon' = \pm$, denote by $\gamma_{\varepsilon, \varepsilon'} \in \mathcal{S}_{+1}(M^*)$ the 
class of the stated tangle isotopic to $\gamma$ such that if $\gamma$ is oriented from an endpoint $v\in a$ towards an endpoint $w\in a$ then the state of $v$ is $\varepsilon$ and the state of $w$ is $\varepsilon'$. 

\begin{theorem}\label{theorem_classical} Let $S$ be a spin structure on $M$ with Johnson quadratic form $w_S: \mathrm{H}_1(M; \mathbb{Z}/2\mathbb{Z}) \to \mathbb{Z}/2\mathbb{Z}$. 
\begin{enumerate}
\item (Bullock \cite{Bullock}, Przytycki-Sikora \cite{PS00}) There is an algebra isomorphism $\Psi^S: \mathcal{O}[X_{\SL_2}(M)] \xrightarrow{\cong} \mathcal{S}_A(M)$ characterized by $\Psi^S(\tau_{\gamma})=(-1)^{w_S([\gamma]) +1} [\gamma]$. 
\item (K.-Murakami \cite{KojuMurakami_QCharVar}) There is an isomorphism of algebra $\Psi^S_* : \mathcal{O}[R_{\SL_2}(M)] \xrightarrow{\cong} \mathcal{S}_A(M^*)$ characterized by 
$$\Psi^S_* \begin{pmatrix} X^{\gamma}_{++} & X^{\gamma}_{+-} \\ X^{\gamma}_{-+} & X^{\gamma}_{--} \end{pmatrix} := (-1)^{w_S(\gamma)}\begin{pmatrix} 0 & -1 \\ 1 & 0 \end{pmatrix} \begin{pmatrix} \gamma_{++} & \gamma_{+-} \\ \gamma_{-+} & \gamma_{--} \end{pmatrix}.$$
Moreover $\Psi^S_*$ is a morphism of $\mathcal{O}[\SL_2]$ comodules.
\end{enumerate}
\end{theorem}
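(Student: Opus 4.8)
The plan is to establish part (1) from classical results and part (2) as a stated refinement, then note how the two interlock.

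For part (1) I would start from Bullock's algebra homomorphism $\beta\colon \mathcal{S}_{-1}(M) \to \mathcal{O}[X_{\SL_2}(M)]$ sending the class of a framing-forgotten knot $K$, with free-homotopy class $[K]\in\pi_1(M)$, to $-\tau_{[K]}$, extended to split links by multiplicativity (legitimate at $A=-1$ since disjoint sublinks inside a ball do not interact). Well-definedness reduces to two checks: the trivial-loop relation becomes $-\tau_1 = -\tr(\mathds{1}) = -2 = -(A^2+A^{-2})|_{A=-1}$, and the Kauffman crossing relation becomes, after resolving one crossing of two transverse curves $\alpha,\beta$, the $\SL_2$ trace identity $\tau_\alpha\tau_\beta = \tau_{\alpha\beta} + \tau_{\alpha\beta^{-1}}$, itself a consequence of Cayley--Hamilton $g+g^{-1} = \tr(g)\mathds{1}$ in $\SL_2$. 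Surjectivity follows from the first fundamental theorem of invariant theory for $\SL_2$: since $\pi_1(M)$ is finitely generated, $\mathcal{O}[X_{\SL_2}(M)]$ is generated as an algebra by the functions $\tau_\gamma$, all in $\operatorname{Im}\beta$. Bijectivity, including the possibly non-reduced scheme structure, is exactly Przytycki--Sikora's theorem \cite{PS00}, whose content is that no relation among skein classes holds beyond those forced by the trace identities; I would quote it directly. Finally, a spin structure $S$ gives, by Barrett's theorem \cite{Barett}, an algebra isomorphism $\mathcal{S}_{-1}(M)\xrightarrow{\cong}\mathcal{S}_{+1}(M)$ multiplying a link $L$ by $(-1)^{w_S([L])}$; composing with $\beta^{-1}$ yields $\Psi^S$ with the asserted normalisation $\Psi^S(\tau_\gamma) = (-1)^{w_S([\gamma])+1}[\gamma]$.

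For part (2) I would run the parallel construction with stated arcs. Define $\Psi^S_*$ on the generators $X^\gamma_{ij}$ by the displayed formula, i.e. $(-1)^{w_S(\gamma)}$ times the matrix $\left(\begin{smallmatrix}0 & -1 \\ 1 & 0\end{smallmatrix}\right)(\gamma_{\bullet\bullet})$; the matrix $\left(\begin{smallmatrix}0 & -1 \\ 1 & 0\end{smallmatrix}\right)$ is the $A\to 1$ specialisation of the quantum cup/cap matrix encoded in the last two families of stated skein relations. One then verifies that $\Psi^S_*$ kills the defining ideal of $\mathcal{O}[R_{\SL_2}(M)]$: the relation $M_\alpha M_\beta = M_{\alpha\beta}$ comes from the splitting/stacking formula for the composition of two stated arcs meeting along $a$ — at $A=1$ the bigon bialgebra $\mathcal{O}_q\SL_2$ becomes $\mathcal{O}[\SL_2]$ and this composition is ordinary matrix multiplication — while $\det M_\alpha = 1$ is precisely the trivial-arc relations combined with the $A=1$ height-exchange relations, i.e. "quantum determinant one" degenerating to $\det = 1$. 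The $\mathcal{O}[\SL_2]$-comodule compatibility is then bookkeeping: the coaction $\Delta^R_a$ obtained by gluing a bigon onto $a$ sends $\gamma_{ij}\mapsto\sum_{a,b}\gamma_{ab}\otimes(\text{two bigon arcs with states }i,a\text{ and }b,j)$, and under the identification $\mathcal{S}_{+1}(\mathbb{B})\cong\mathcal{O}[\SL_2]$ recalled above these bigon arcs are the coordinate functions $x_{ia}$ and $S(x_{bj})$, matching the formula for $\Delta^R$.

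Surjectivity of $\Psi^S_*$ is easy: $\mathcal{S}_{+1}(M^*)$ is spanned by stated tangles, each a product of stated arcs issuing from $a$ and of closed curves, the latter lying in the image of $\mathcal{S}_{+1}(M)\hookrightarrow\mathcal{S}_{+1}(M^*)$, hence in $\operatorname{Im}\Psi^S_*$ by part (1). The main obstacle is injectivity, and the plan is to produce an explicit two-sided inverse $\Phi_*\colon\mathcal{S}_{+1}(M^*)\to\mathcal{O}[R_{\SL_2}(M)]$ by a "classical Reshetikhin--Turaev" evaluation against the universal representation $\gamma\mapsto M_\gamma\in\SL_2\bigl(\mathcal{O}[R_{\SL_2}(M)]\bigr)$: a stated arc of class $\gamma$ with boundary states $i,j$ is sent to the corresponding matrix coefficient of $M_\gamma$ (with the cup/cap matrix inserted at the marking and the global sign $(-1)^{w_S}$), and a closed curve to $-\tau$ of its class. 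The crux — the one genuinely non-formal step — is checking that $\Phi_*$ respects all four families of stated skein relations; this is where the non-reduced subtleties, the precise $\epsilon$-matrix conventions at $A=1$, and the compatibility of states with the coaction are used, and it amounts to saying that $\mathcal{O}[R_{\SL_2}(M)]$ carries a universal "classical stated skein" structure. Once $\Phi_*$ is well defined, $\Phi_*\circ\Psi^S_* = \id$ is checked on the generators $X^\gamma_{ij}$ and $\Psi^S_*\circ\Phi_* = \id$ on stated tangles, finishing the proof. An alternative is to decompose both algebras into $\mathcal{O}[\SL_2]$-isotypic components over their coinvariant subrings; $\Psi^S_*$ preserves this decomposition and is an isomorphism on the trivial isotype by part (1), but upgrading surjectivity to bijectivity on the higher isotypes seems to need a Hilbert-series or flatness input that the explicit inverse supplies more cheaply.
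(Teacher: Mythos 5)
The paper does not prove this theorem: it is imported verbatim from the cited references (\cite{Bullock}, \cite{PS00}, Barrett's spin-structure twist, and \cite{KojuMurakami_QCharVar}), so there is no in-paper argument to compare against line by line. Your reconstruction of part (1) is the standard one and is correct: Bullock's homomorphism $\beta([K])=-\tau_{[K]}$, well-definedness via $\tr(\mathds{1})=2$ and the Cayley--Hamilton trace identity $\tau_\alpha\tau_\beta=\tau_{\alpha\beta}+\tau_{\alpha\beta^{-1}}$, surjectivity from generation of $\mathcal{O}[X_{\SL_2}(M)]$ by trace functions, injectivity (with the non-reduced structure) quoted from Przytycki--Sikora, and the sign $(-1)^{w_S([\gamma])+1}$ from composing with Barrett's isomorphism $\mathcal{S}_{-1}(M)\cong\mathcal{S}_{+1}(M)$. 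This is exactly the chain of results the introduction invokes.

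For part (2) your setup is right (the displayed matrix is indeed the $A\to 1$ limit of the cup/cap data, and the comodule compatibility is the bigon coaction), but be aware that the entire content of the theorem sits in the step you defer: the well-definedness of the evaluation $\Phi_*$ on all relations of $\mathcal{S}_{+1}(M^*)$, including isotopies through the $3$-manifold and the height-exchange relations at the single marking. Calling this ``the one genuinely non-formal step'' is honest, but as written your proof of injectivity is an announcement, not an argument. It is also not quite how \cite{KojuMurakami_QCharVar} proceeds: there the isomorphism is first established for marked \emph{surfaces} (where explicit bases of stated skein algebras are available, following Korinman--Quesney and Costantino--L\^e), and the $3$-manifold case is then deduced by writing $\mathcal{S}_{+1}(M^*)$ as a relative tensor product over a Heegaard splitting via the splitting morphism, matched against the Van Kampen presentation $\mathcal{O}[R_{\SL_2}(M)]\cong\mathcal{O}[R_{\SL_2}(H^1_g)]\otimes_{\mathcal{O}[R_{\SL_2}(\Sigma_g)]}\mathcal{O}[R_{\SL_2}(H^2_g)]$ --- precisely the mechanism this paper recalls in Theorem \ref{theorem_heegaard}. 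That reduction buys you injectivity for free from the surface case, whereas your direct universal evaluation on $M^*$ would require reproving a Reshetikhin--Turaev-type invariance in an ambient $3$-manifold, which is harder than the problem it is meant to solve. Your closing remark that the isotypic-decomposition route needs extra flatness input is a fair observation, and the Heegaard route is the standard way to supply what is missing.
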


\subsection{Frobenius morphisms and coherent sheaves}

Let $\zeta \in \mathbb{C}$ be a root of unity of odd order $N\geq 3$ and denote by $\mathcal{S}_{\zeta}(\mathbf{M})$ the stated skein module at $A^{1/2}=\zeta$. 

\begin{theorem}\label{theorem_Frobenius}(\cite{BonahonWong1, LeKauffmanBracket, KojuQuesneyClassicalShadows, BloomquistLe}) Let $\mathbf{M}$ be a marked $3$-manifold. There exists a linear map $Fr: \mathcal{S}_{+1}(\mathbf{M}) \to \mathcal{S}_{\zeta}(\mathbf{M})$ whose image consists in "transparent elements". Moreover when $\mathbf{M}$ is a thickened marked surface, then $Fr$ is an injective morphism of algebras.
\end{theorem}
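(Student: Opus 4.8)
The plan is to build $Fr$ from a single \emph{local} recipe on stated tangles and then verify that it is globally well defined, an algebra morphism on surfaces, and has transparent image. The recipe keeps the underlying framed $1$-manifold in place and: (i) replaces each closed component by the $N$-th Chebyshev polynomial $T_N$ (normalized by $T_N(x+x^{-1})=x^N+x^{-N}$) evaluated on that component --- $T_N$ is the relevant polynomial because $\zeta$ has odd order $N$, so that $q:=A^2=\zeta^4$ and $A^4=\zeta^8$ also have order $N$ and $A^{N^2}=1$; (ii) in a bigon-neighborhood of each stated arc, whose four stated versions are the generators of $\mathcal{S}_{+1}(\mathbb{B})\cong\mathcal{O}[\SL_2]$, replaces them by their images under the quantum Frobenius embedding of function algebras $\mathcal{O}[\SL_2]\hookrightarrow\mathcal{O}_q\SL_2\cong\mathcal{S}_\zeta(\mathbb{B})$, which exists precisely because $q$ has odd order. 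Steps (i) and (ii) are two faces of one map: cutting a $T_N$-threaded closed curve open along a boundary edge produces exactly the Frobenius-stated arc, compatibly with the right $\mathcal{O}_q\SL_2$-comodule structure $\Delta_b^R$ coming from the splitting with a bigon. One extends the recipe linearly to the free $k$-module on stated tangles.

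Well-definedness --- that the recipe respects ambient isotopy and the defining stated skein relations --- is the technical heart. Every ambient isotopy of a tangle in a $3$-manifold factors through moves supported in balls, or in half-balls abutting a boundary edge, and the skein relations are themselves supported there, so one is left with a finite list of purely local identities. The closed-strand part of each identity is the Bonahon--Wong / L\^e Chebyshev-threading lemma, whose engine is the ``miraculous'' cancellation making $T_N$, and only $T_N$, slide consistently through a crossing at a root of unity of order $N$; the arc part --- in particular compatibility with the boundary relations and the height-exchange relations carrying the coefficients $A^{\pm 1/2}$ and $A^{5/2}$ --- is the statement that the bigon map of (ii) is a bialgebra morphism of $\mathcal{O}_q\SL_2$ intertwining $\Delta_b^R$, which is checked on the four generators. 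Transparency of the image is likewise local: one must push a single strand across a $T_N$-threaded circle, respectively across a Frobenius-stated arc; for circles this is the Bonahon--Wong transparency lemma (in a ball, a small $T_N$-threaded circle linking a strand once equals the configuration with the circle deleted), and the arc case follows by cutting open and using (ii).

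On a thickened marked surface $\mathbf{\Sigma}$ the recipe threads each component separately while the algebra product is superposition in the $[0,1]$ direction, so $Fr$ is an algebra morphism. For injectivity, compose with the stated quantum trace $\mathcal{S}_A(\mathbf{\Sigma})\hookrightarrow\mathcal{Z}_A$ into the balanced Chekhov--Fock quantum torus of an ideal triangulation of $\mathbf{\Sigma}$ (Bonahon--Wong, L\^e, Costantino--L\^e): under the quantum trace the threading map becomes the monomial substitution $Z_i\mapsto Z_i^N$, which is patently injective on a quantum torus and has image in its central sub-torus; since the quantum trace is injective already at $A=+1$, the composite is injective and hence so is $Fr$. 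This same picture re-proves well-definedness and transparency on surfaces. The handful of small exceptional surfaces admitting no ideal triangulation --- among them the bigon, where $Fr$ is the quantum Frobenius of $\mathcal{O}_q\SL_2$ itself --- are handled directly.

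To reach an arbitrary marked $3$-manifold $\mathbf{M}=(M,\mathcal{N})$, present $\mathcal{S}_\zeta(\mathbf{M})$ as a quotient of $\mathcal{S}_\zeta(\mathbf{\Sigma})$ for $\mathbf{\Sigma}$ a marked surface coming from a handle/Heegaard decomposition compatible with $\mathcal{N}$: every tangle isotopes into a bicollar of $\mathbf{\Sigma}$, so the inclusion is surjective on skein modules, with kernel generated by the $2$-handle relations. Because threading is local and natural under marked embeddings --- concretely, it commutes with the splitting morphisms $\theta_{a\#b}$ that assemble $\mathbf{M}$ from balls and bigons --- it carries these relations to relations, so $Fr$ descends; a short computation with the loop value $-(A^2+A^{-2})$ checks that $T_N$ of a curve bounding a disc in a handlebody goes where it must. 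Independence of the decomposition follows since any two are connected by stabilizations, governed by the bigon and the thickened disc where $Fr$ is already pinned down, and transparency descends because it was checked near each strand. The step I expect to be the main obstacle is the well-definedness on the thickened surface --- the local crossing identity and its interaction with the stated boundary and height-exchange relations --- together with the coherence needed to descend $Fr$ to a general marked $3$-manifold independently of the handle decomposition; once the recipe is well defined, the algebra-morphism property, the injectivity on surfaces, and transparency are comparatively formal.
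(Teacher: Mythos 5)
This theorem is quoted from \cite{BonahonWong1, LeKauffmanBracket, KojuQuesneyClassicalShadows, BloomquistLe} and not proved in the paper, so your proposal can only be measured against the cited literature; there your overall architecture is the standard one (Chebyshev threading of closed components, quantum Frobenius of $\mathcal{O}_q\SL_2$ on the bigon for stated arcs, local verification of the skein and height-exchange relations, descent to $3$-manifolds via splitting morphisms and Heegaard/handle decompositions), and it matches the paper's own description of $Fr$ on arcs, since $N$ parallel push-offs of $\alpha_{ij}$ realize $x_{ij}^N$ in the bigon.

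The concrete gap is in your injectivity argument for thickened surfaces. You route everything through the quantum trace into the balanced Chekhov--Fock torus of an ideal triangulation and dismiss the remaining cases as ``a handful of small exceptional surfaces.'' But a surface admits an ideal triangulation only if it has punctures or marked boundary, so \emph{every} closed surface $\Sigma_g$ is excluded --- and these are precisely the surfaces this paper needs ($Fr:\mathcal{S}_{+1}(\Sigma_g)\to Z(\mathcal{S}_\zeta(\Sigma_g))$ is invoked as an isomorphism in Theorem \ref{theorem_Azumaya}). There is no embedding trick that reduces the closed case to the punctured one (filling a puncture induces a surjection, not an injection, of skein algebras), so this step genuinely fails as written. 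The standard repair is a triangularity argument: multicurves form a $k$-basis of $\mathcal{S}_A(\Sigma)$ for every $A$, and $Fr$ of a multicurve $\gamma_1^{n_1}\cdots\gamma_k^{n_k}$ expands in the multicurve basis with leading term $\gamma_1^{Nn_1}\cdots\gamma_k^{Nn_k}$ and unit coefficient with respect to a suitable partial order, whence injectivity for all surfaces at once. A secondary quibble: your parenthetical form of the transparency lemma is wrong --- a small $T_N$-threaded unknot linking a strand once can be \emph{unlinked} (that is transparency), but not deleted; deleting it costs the factor $T_N(-A^2-A^{-2})=-2$.
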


Theorem \ref{theorem_Frobenius} was first proved for unmarked thickened surfaces in \cite{BonahonWong1}, for unmarked $3$-manifolds in \cite{LeKauffmanBracket}, for marked thickened surfaces in \cite{KojuQuesneyClassicalShadows} and marked $3$-manifolds in \cite{BloomquistLe}. We call \textbf{Frobenius morphism} the linear map $Fr$. It is roughly described as follows, we refer to \cite{BloomquistLe} for details. 
The $n$\textbf{-th Chebyshev polynomial of first kind} is the polynomial $T_n(X) \in \mathbb{Z}[X]$ defined recursively by $T_0=1$, $T_1=X$ and $T_{n-1}(X)+T_{n+1}(X)=XT_n(X)$. For $T$ a stated tangle in $\mathbf{M}$, then $Fr([T])$ is defined by replacing each open component of $T$, i.e. each stated arc $\alpha_{ij} \subset T$, by $N$ parallel copies of $\alpha_{i,j}$ pushed in the framing direction, and replacing each closed component of $T$, i.e. each closed curve $\gamma \subset T$, by $T_N(\gamma)$. The transparency condition means that if $T_1, T_2$ are two stated arcs, then the class $[Fr(T_1)\cup T_2] \in \mathcal{S}_{\zeta}(\mathbf{M})$ does not depend on how $Fr(T_1)$ and $T_2$ are entangled so the assignment $[T_1] \rhd [T_2]:= [Fr(T_1) \cup T_2]$ defines a module structure $\rhd: \mathcal{S}_{+1}(\mathbf{M}) \otimes \mathcal{S}_{\zeta}(\mathbf{M}) \to \mathcal{S}_{\zeta}(\mathbf{M})$, so endows $\mathcal{S}_{\zeta}(\mathbf{M})$ with a structure of $\mathcal{S}_{+1}(\mathbf{M})$ module. 
\par Now suppose that $M$ is a closed connected $3$-manifold and denote the two Frobenius morphisms by $Fr: \mathcal{S}_{+1}(M) \to \mathcal{S}_{\zeta}(M)$ and $Fr_*: \mathcal{S}_{+1}(M^*) \to \mathcal{S}_{\zeta}(M^*)$. Fix a spin structure $S$ on $M$. Using the isomorphisms $\Psi^S$ and $\Psi^S_*$ of Theorem \ref{theorem_classical}, $\mathcal{S}_{\zeta}(M)$ and $\mathcal{S}_{\zeta}(M^*)$ acquire structures of $\mathcal{O}[X_{\SL_2}(M)]$ and $\mathcal{O}[R_{\SL_2}(M)]$ modules respectively, so they define coherent sheaves over $X_{\SL_2}(M)$ and $R_{\SL_2}(M)$.

\begin{definition} We denote by $\mathscr{L}^M \to X_{\SL_2}(M)$ and $\mathscr{I}^M \to R_{\SL_2}(M)$ the coherent sheaves defined by the Frobenius morphism $Fr$ and $Fr_*$ respectively.
\end{definition}

Let $\pi: R_{\SL_2}(M) \to X_{\SL_2}(M)$ be the projection map coming from the inclusion $\mathcal{O}[X_{\SL_2}(M)] \subset \mathcal{O}[R_{\SL_2}(M)]$. Let $\iota : \mathcal{S}_A(M) \to \mathcal{S}_A(M^*)$ be the natural inclusion morphism. Then $\iota$ defines a sheaf morphism $\pi': \pi^* \mathscr{L}^M \to \mathscr{I}^M$ over $R_{\SL_2}(M)$. 
\par Let $\rho\in R_{\SL_2}(M)$ be a closed point corresponding to a maximal ideal $\mathfrak{m}_{\rho} \in \mathcal{S}_{+1}(M)$ through $\Psi^S$. Similarly denote by $\mathfrak{m}_{[\rho]}\subset \mathcal{S}_{+1}(M)$ the maximal ideal corresponding to $[\rho] \in X_{\SL_2}(M)$. The \textbf{skein module reduced at }$[\rho]$ and the \textbf{stated skein module reduced at }$\rho$ are the quotients: 
$$ \mathcal{S}_{\zeta, [\rho]}(M):= \quotient{\mathcal{S}_{\zeta}(M)}{Fr(\mathfrak{m}_{[\rho]})\mathcal{S}_{\zeta}(M)} \quad \mbox{ and }\quad  \mathcal{S}_{\zeta, \rho}(M^*):= \quotient{\mathcal{S}_{\zeta}(M^*)}{Fr_*(\mathfrak{m}_{\rho})\mathcal{S}_{\zeta}(M*)}.$$

\subsection{Heegaard splittings}

For $M$ a connected closed $3$-manifold, a \textbf{Heegaard splitting} is the data of an embedding $\Sigma_g\times [0,1]\hookrightarrow M$ such that $M\setminus \Sigma_g\times [0,1]$ has two connected components whose closures in $M$ are two handlebodies $H_g^1$ and $H_g^2$, so $M\cong H_g^1 \cup_{\Sigma_g}H_g^2$. We suppose that $H_g^{1}$ intersects $\Sigma\times \{0\}$ while $H_g^{2}$ intersects $\Sigma \times \{1\}$.
 Let $A^{1/2}$ be an invertible element in a commutative unital ring $k$. Identifying $(\Sigma\times[0,1]) \cup_{\Sigma\times \{1\}} H_g^2$ with $H_g^2$ and using the functoriality of $\mathcal{S}_A$, $\mathcal{S}_A(H^2_g)$ acquires a structure of $\mathcal{S}_A(\Sigma_g)$ left module. Similarly, identifying $H_g^1 \cup_{\Sigma \times \{0\}} (\Sigma \times [0,1])$ with $H_g^1$,  $\mathcal{S}_A(H^1_g)$ acquires a structure of $\mathcal{S}_A(\Sigma_g)$ right module.
 The embeddings $H_g^1 \hookrightarrow M$ and ${H}^2_g \hookrightarrow M$ induce a linear map
  $$\varphi : \mathcal{S}_A(H^1_g) \otimes_{\mathcal{S}_A(\Sigma_g)} \mathcal{S}_A(H^2_g) \to \mathcal{S}_A(M).$$
\par Choose a ball $B\subset M$  which intersects $\Sigma_g \times [0,1] \subset M$  along a thickened disc $D=\times [0,1] B \cap (\Sigma_g\times [0,1])$ such that  $B\setminus D\times [0,1]$ consists in two hemispheres $E_1\subset H_g^1$ and $E_2 \subset H_g^2$. 
 Let $a\subset \partial B$ be an oriented arc such that $(1)$ $a$ intersects $D\times [0,1]$  along an arc $\{v\} \times [0,1]$ with $v\in \partial D$ oriented from $(v,0)$ towards $(v, 1)$ and $(2)$ for $i=1,2$, $a_i:=a\cap E_i$ is a connected arc. Consider the marked $3$-manifolds $M^*=(M\setminus \mathring{B}, \{a\})$, $H_g^{*, i}:= ( \overline{H_g^i \setminus E_i} ,  \{a_i\})$ for $i=1,2$  and the marked surface $\Sigma_g^*=(\Sigma_g \setminus \mathring{D}, \{v\})$. Like before, $\mathcal{S}_A(H_g^{*, 1})$ and $\mathcal{S}_A({H}_g^{*, 2})$ acquire structures of right and left $\mathcal{S}_A(\Sigma_g^*)$ modules and we have a linear morphism 
$$\varphi_* : \mathcal{S}_A(H_g^{*, 1}) \otimes_{\mathcal{S}_A(\Sigma_g^*)} \mathcal{S}_A(H_g^{*, 2}) \to \mathcal{S}_A(M^*).$$

\begin{theorem}\label{theorem_heegaard}
\begin{enumerate}
\item (\cite{Przytycki_skein}) $\varphi : \mathcal{S}_A(H^1_g) \otimes_{\mathcal{S}_A(\Sigma_g)} \mathcal{S}_A(H^2_g) \to \mathcal{S}_A(M)$ is a linear isomorphism.
\item (\cite{GunninghamJordanSafranov_FinitenessConjecture, KojuMurakami_QCharVar, CostantinoLe_SSkeinTQFT}) $\varphi_* : \mathcal{S}_A(H_g^{*, 1}) \otimes_{\mathcal{S}_A(\Sigma_g^*)} \mathcal{S}_A(H_g^{*, 2}) \to \mathcal{S}_A(M^*)$ is a linear isomorphism.
\end{enumerate}
\end{theorem}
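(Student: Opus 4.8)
My plan is to treat both statements as instances of a single ``Van Kampen'' principle for skein modules: cutting a $3$-manifold along a two-sided thickened (and, for the second statement, marked) surface $\mathbf{\Sigma}$ should present its skein module as the relative tensor product of the skein modules of the two sides over $\mathcal{S}_A(\mathbf{\Sigma})$. Here the cutting surfaces are the collars $\Sigma_g\times[0,1]\subset M$ and $\Sigma_g^*\times[0,1]\subset M^*$, and the two sides are the (marked) handlebodies. The maps $\varphi$ and $\varphi_*$ are well defined because a skein carried by such a collar can be pushed to either handlebody, which is exactly the balancing relation defining $\otimes_{\mathcal{S}_A(\mathbf{\Sigma})}$. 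For part $(1)$ one may simply invoke Przytycki \cite{Przytycki_skein}; I indicate below how I would run the argument so that the same scheme also yields part $(2)$.

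For surjectivity I would argue by general position together with the skein relations. Given a (stated) tangle $L$ in $M$ (resp.\ in $M^*$), isotope it transversally to the levels $\Sigma_g\times\{t\}$ of the collar, and then use the skein relations to push each local extremum of the collar height function through one of the bounding surfaces $\Sigma_g\times\{0\}$, $\Sigma_g\times\{1\}$ into a handlebody, strictly decreasing the number of such extrema; one ends with a linear combination of tangles whose intersection with $\mathbf{\Sigma}\times[0,1]$ is a disjoint family of vertical arcs $\{q_i\}\times[0,1]$. Such a family is a product, in the algebra $\mathcal{S}_A(\mathbf{\Sigma})$, of the elementary vertical strands through the $q_i$, so each of these tangles is visibly in the image of $\varphi$ (resp.\ $\varphi_*$). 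In the stated case one has the extra bookkeeping that the marked arc $a$ of $M^*$ decomposes along the collar as $a_1\cup(\{v\}\times[0,1])\cup a_2$, and one must check that the height-exchange and boundary skein relations are compatible with cutting along $\Sigma_g^*\times[0,1]$; this compatibility is exactly the content of the splitting morphism $\theta_{a\#b}$ of \cite{LeStatedSkein,BloomquistLe}.

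Injectivity is the step I expect to be the real obstacle, since it amounts to showing that no relation of $\mathcal{S}_A(M)$ (resp.\ $\mathcal{S}_A(M^*)$) fails to be a consequence of the relations on the two sides together with the balancing relation. For part $(1)$ I would follow Przytycki: exhibit a $k$-basis of $\mathcal{S}_A(H_g)$ adapted to the $\mathcal{S}_A(\Sigma_g)$-action, compute the relative tensor product explicitly, and match the resulting basis bijectively with a no-relations spanning family of $\mathcal{S}_A(M)$; equivalently, build a retraction of $\varphi$ out of a splitting morphism along $\Sigma_g\times[0,1]$. For part $(2)$ I would upgrade this to the stated setting by combining the coassociativity and the surface-level injectivity of the splitting morphisms of \cite{LeStatedSkein,BloomquistLe} with the finiteness theorem of \cite{GunninghamJordanSafranov_FinitenessConjecture} and the identification $\mathcal{S}_{+1}(M^*)\cong\mathcal{O}[R_{\SL_2}(M)]$ of \cite{KojuMurakami_QCharVar}, reducing injectivity once more to a basis comparison of the sort carried out in the stated skein TQFT of \cite{CostantinoLe_SSkeinTQFT}. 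The genuinely delicate point, in both parts, is to rule out that the height-exchange moves and the sliding relation across the marking $a$ --- which live precisely at the cutting surface --- produce extra kernel; once these are controlled, the remainder is routine general position.
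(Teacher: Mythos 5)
First, note that the paper does not actually prove Theorem \ref{theorem_heegaard}: both parts are imported from the literature, and the paragraph following the statement only attributes them (part (1) to the Hoste--Przytycki argument detailed in \cite{Przytycki_skein}, part (2) to \cite{GunninghamJordanSafranov_FinitenessConjecture, KojuMurakami_QCharVar, CostantinoLe_SSkeinTQFT}). So your sketch is being measured against those sources rather than against anything in this paper. Your surjectivity argument is the standard one and is fine: general position with respect to the collar levels, elimination of critical points of the collar height function by pushing them into the handlebodies, and the observation that a disjoint union of vertical strands is a product in $\mathcal{S}_A(\mathbf{\Sigma})$ acting on the handlebody skeins.

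The gap is in injectivity. Your plan for part (1) --- ``match the resulting basis bijectively with a no-relations spanning family of $\mathcal{S}_A(M)$'' --- is circular: for a general closed $M$ no basis (indeed no presentation) of $\mathcal{S}_A(M)$ is known in advance; producing one is essentially equivalent to the theorem. What the Hoste--Przytycki argument actually does is construct the inverse map: one shows that the class in $\mathcal{S}_A(H^1_g)\otimes_{\mathcal{S}_A(\Sigma_g)}\mathcal{S}_A(H^2_g)$ of a link put in standard position is independent of all choices, by decomposing an arbitrary ambient isotopy in $M$ into isotopies each supported in $H^1_g\cup(\Sigma_g\times[0,1])$ or in $(\Sigma_g\times[0,1])\cup H^2_g$, each of which is absorbed by the balancing relation of the relative tensor product. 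This analysis of the \emph{isotopy} (not of the link) is the crux of injectivity and is absent from your sketch. Relatedly, the proposed ``retraction of $\varphi$ out of a splitting morphism along $\Sigma_g\times[0,1]$'' is not available: the splitting morphisms of \cite{LeStatedSkein, BloomquistLe} split along discs whose diameters are boundary edges, and there is no such comultiplication for cutting along a closed unmarked surface (nor along $\Sigma_g^*$, which carries a single marking). For part (2) the cited proofs rely on genuinely different machinery (internal skein modules and factorization techniques in \cite{GunninghamJordanSafranov_FinitenessConjecture}, explicit handlebody computations in \cite{KojuMurakami_QCharVar, CostantinoLe_SSkeinTQFT}), none of which is reconstructed here. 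Since the paper itself treats the theorem as a black box, citing it as you partly do is acceptable; but read as a self-contained proof, the injectivity half does not go through as written.
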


More precisely, the fact that $\varphi$ is an isomorphism follows from a classical argument of Hoste and Pryszyticki detailed in \cite{Przytycki_skein} (see also \cite{GunninghamJordanSafranov_FinitenessConjecture}). The fact that $\varphi_*$ is an isomorphism is somehow implicit in \cite{GunninghamJordanSafranov_FinitenessConjecture} using a very different language (no mention of $M^*$ is present here) and was reproved independently in \cite[Corollary $6.14$]{KojuMurakami_QCharVar} and \cite[Theorem $6.5$]{CostantinoLe_SSkeinTQFT}. At the classical level,  the Van Kampen theorem asserts that we have a push-out square of groups
 $$ \begin{tikzcd}
  \pi_1(\Sigma_g, v)
   \ar[r, ] \ar[d, ] 
   & 
   \pi_1(H_g^{1}, v) \ar[d] \\
  \pi_1(H_g^{2},v ) \ar[r] & \pi_1(M, v)
  \end{tikzcd}
  $$
which induces isomorphisms 
$$  \mathcal{O}[{R}_{\SL_2}(M)]\cong  \mathcal{O}[{R}_{\SL_2}(H_g^{1})]\otimes_{ \mathcal{O}[{R}_{\SL_2}(\Sigma_g)]}  \mathcal{O}[{R}_{\SL_2}(H_g^{2})] \quad \mbox{ and }\quad $$
$$
\mathcal{O}[{X}_{\SL_2}(M)]\cong  \mathcal{O}[{X}_{\SL_2}(H_g^{(1)})]\otimes_{ \mathcal{O}[{X}_{\SL_2}(\Sigma_g)]}  \mathcal{O}[{X}_{\SL_2}(H_g^{(2)})]$$
as proved, for instance, in \cite[Proposition $2.10$]{MarcheCharVarSkein}. Under these isomorphisms, the isomorphism $\varphi$ and $\varphi_*$ are $\mathcal{O}[X_{\SL_2}(M)]$ and $\mathcal{O}[R_{\SL_2}(M)]$ equivariant respectively. 
Therefore, Theorem \ref{theorem_heegaard}, implies the 

\begin{corollary}\label{coro_heegaard}
Let $\zeta$ be a root of unity of odd order.
Let $\rho : \pi_1(M, v) \to \SL_2$ be a representation and denote by $\rho_1\in R_{\SL_2}({H}^1_g)$, $\rho_2\in R_{\SL_2}(H^2_g)$ and $\rho_{\Sigma} \in R_{\SL_2}(\Sigma_g)$ the representations obtained by precomposing with the inclusion morphisms $\pi_1(H^1_g)\to \pi_1(M)$, $\pi_1({H}^2_g)\to \pi_1(M)$ and $\pi_1(\Sigma_g) \to \pi_1(M)$. Then $\varphi$ and $\overline{\varphi}$ induce isomorphisms
$$ \varphi_{[\rho]}:  \mathcal{S}_{\zeta, [\rho_1]}(H^1_g) \otimes_{\mathcal{S}_{\zeta, [\rho_{\Sigma}]}(\Sigma_g)} \mathcal{S}_{\zeta, [\rho_2]}(H^2_g) \to \mathcal{S}_{\zeta, [\rho]}(M)
 \quad
\varphi_{\rho, *}:  \mathcal{S}_{\zeta, \rho_1}(H_g^{*, 1}) \otimes_{\mathcal{S}_{\zeta, \rho_{\Sigma}}(\Sigma_g^*)} \mathcal{S}_{\zeta, \rho_2}(H_g^{*, 2}) \to \mathcal{S}_{\zeta, \rho}(M^*).
$$
\end{corollary}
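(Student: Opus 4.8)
The plan is to deduce the Corollary from Theorem~\ref{theorem_heegaard} by a base change argument: the reduced (stated) skein modules are the fibres of the coherent sheaves $\mathscr{L}^M$, $\mathscr{I}^M$ at the relevant closed points, and taking fibres commutes with the relative tensor products produced by the Heegaard splitting.

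First I would record the elementary translation that, via $\Psi^S$ and $\Psi^S_*$,
$$\mathcal{S}_{\zeta,[\rho]}(M)\cong\mathcal{S}_\zeta(M)\otimes_{\mathcal{O}[X_{\SL_2}(M)]}\mathbb{C},\qquad \mathcal{S}_{\zeta,\rho}(M^*)\cong\mathcal{S}_\zeta(M^*)\otimes_{\mathcal{O}[R_{\SL_2}(M)]}\mathbb{C},$$
and similarly for the handlebodies and for $\Sigma_g$, where $\mathbb{C}$ denotes the residue field at the relevant closed point. Next I would verify the module compatibilities needed for the statement to make sense. Writing $S:=\mathcal{S}_\zeta(\Sigma_g)$, the point is that the $\mathcal{S}_{+1}(\Sigma_g)=\mathcal{O}[X_{\SL_2}(\Sigma_g)]$--module structure on $N_1:=\mathcal{S}_\zeta(H^1_g)$ obtained by restricting its $\mathcal{O}[X_{\SL_2}(H^1_g)]$--module structure along $\mathcal{O}[X_{\SL_2}(\Sigma_g)]\to\mathcal{O}[X_{\SL_2}(H^1_g)]$ agrees with the one obtained from its right $S$--module structure composed with $Fr:\mathcal{S}_{+1}(\Sigma_g)\to S$; this is immediate from naturality of $Fr$ under the embedding $\Sigma_g\times[0,1]\hookrightarrow H^1_g$ and its compatibility with splitting morphisms (Theorem~\ref{theorem_Frobenius}). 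One also checks that the $\mathcal{O}[X_{\SL_2}(H^1_g)]$-- and $S$--actions on $N_1$ commute, since the former acts by Frobenius skeins pushed into the interior of $H^1_g$ and the latter in a boundary collar, hence in disjoint regions, transparency making the identification independent of entanglement. The analogues hold for $N_2:=\mathcal{S}_\zeta(H^2_g)$ and, verbatim, for the marked manifolds $H^{*,i}_g$, $\Sigma_g^*$, $M^*$ with $X_{\SL_2}$ replaced by $R_{\SL_2}$.

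The heart of the argument is the algebraic identity
$$\bigl(N_1\otimes_S N_2\bigr)\otimes_{\mathcal{O}[X_{\SL_2}(M)]}\mathbb{C}\;\cong\;\mathcal{S}_{\zeta,[\rho_1]}(H^1_g)\otimes_{\mathcal{S}_{\zeta,[\rho_\Sigma]}(\Sigma_g)}\mathcal{S}_{\zeta,[\rho_2]}(H^2_g).$$
To prove it I would use the push-out description $\mathcal{O}[X_{\SL_2}(M)]\cong R_1\otimes_R R_2$ with $R=\mathcal{O}[X_{\SL_2}(\Sigma_g)]$ and $R_i=\mathcal{O}[X_{\SL_2}(H^i_g)]$ (from \cite{MarcheCharVarSkein}): since $[\rho_1]$ and $[\rho_2]$ restrict to the common point $[\rho_\Sigma]$, the kernel of the evaluation $R_1\otimes_R R_2\to\mathbb{C}$ at $[\rho]$ is $\mathfrak{m}_{[\rho_1]}\otimes_R R_2+R_1\otimes_R\mathfrak{m}_{[\rho_2]}$. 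Killing $\mathfrak{m}_{[\rho_1]}$ and $\mathfrak{m}_{[\rho_2]}$ one at a time and using right exactness of $-\otimes_S-$ in each variable turns the left-hand side into $(N_1/\mathfrak{m}_{[\rho_1]}N_1)\otimes_S(N_2/\mathfrak{m}_{[\rho_2]}N_2)$; finally the $S$--action on $N_i/\mathfrak{m}_{[\rho_i]}N_i$ factors through $\mathcal{S}_{\zeta,[\rho_\Sigma]}(\Sigma_g)=S/Fr(\mathfrak{m}_{[\rho_\Sigma]})S$ — using the compatibility of the previous paragraph together with the fact that $Fr(\mathfrak{m}_{[\rho_\Sigma]})S$ is a two-sided ideal by transparency — so $\otimes_S$ may be replaced by $\otimes_{\mathcal{S}_{\zeta,[\rho_\Sigma]}(\Sigma_g)}$.

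I would then conclude: by Theorem~\ref{theorem_heegaard}(1), $\varphi$ is an $\mathcal{O}[X_{\SL_2}(M)]$--equivariant isomorphism, so tensoring it with $\mathbb{C}$ over $\mathcal{O}[X_{\SL_2}(M)]$ and invoking the identity above gives $\varphi_{[\rho]}$; the starred case is identical using Theorem~\ref{theorem_heegaard}(2) and the push-out for $R_{\SL_2}$. The one genuinely non-formal step is the module-compatibility check of the second paragraph: one must make sure that the several $\mathcal{S}_{+1}$--module structures on the $\mathcal{S}_\zeta$'s arising from the Heegaard decomposition agree with those transported through $\Psi^S$, $\Psi^S_*$ and with the $\mathcal{S}_\zeta(\Sigma_g)$--bimodule structures. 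Granting naturality of $Fr$ and its compatibility with splittings this is routine, but it is where all the content lies.
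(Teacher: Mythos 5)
Your proposal is correct and takes essentially the same route as the paper, which deduces the corollary immediately from the $\mathcal{O}[X_{\SL_2}(M)]$- (resp.\ $\mathcal{O}[R_{\SL_2}(M)]$-) equivariance of $\varphi$ (resp.\ $\varphi_*$) together with the push-out descriptions of these rings; you have merely made the base-change and right-exactness bookkeeping explicit. Nothing further is needed.
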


\subsection{The inclusion of skein into stated skein is injective}

\begin{proposition}\label{prop_injection} Let $k$ be a unital commutative ring and $A^{1/2} \in k^{\times}$. 
 The inclusion $\iota: \mathcal{S}_A(M) \to \mathcal{S}_A(M^*)$ is injective.
 \end{proposition}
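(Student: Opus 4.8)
The plan is to exploit the Heegaard splitting comparison (Theorem~\ref{theorem_heegaard}) together with an explicit description of the map $\iota$ on the handlebody level, reducing the injectivity of $\iota : \mathcal{S}_A(M) \to \mathcal{S}_A(M^*)$ to a statement about thickened surfaces and handlebodies, where stated skein modules are much better understood (they are free, and the splitting morphisms for thickened surfaces are injective algebra maps). Concretely, fix a Heegaard splitting $M \cong H_g^1 \cup_{\Sigma_g} H_g^2$ and choose the ball $B$ as in the setup of Section~2.4, so that $M^* = (M \setminus \mathring{B}, \{a\})$ is compatible with the splitting and we get $\varphi : \mathcal{S}_A(H_g^1) \otimes_{\mathcal{S}_A(\Sigma_g)} \mathcal{S}_A(H_g^2) \xrightarrow{\cong} \mathcal{S}_A(M)$ and $\varphi_* : \mathcal{S}_A(H_g^{*,1}) \otimes_{\mathcal{S}_A(\Sigma_g^*)} \mathcal{S}_A(H_g^{*,2}) \xrightarrow{\cong} \mathcal{S}_A(M^*)$. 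Under these identifications, $\iota$ is induced by the three inclusion maps $\mathcal{S}_A(\Sigma_g) \to \mathcal{S}_A(\Sigma_g^*)$, $\mathcal{S}_A(H_g^i) \to \mathcal{S}_A(H_g^{*,i})$ (placing the marking on the newly-created disc), so the first step is to check that $\varphi_* \circ (\iota_{H^1} \otimes \iota_{H^2}) = \iota \circ \varphi$ as maps out of the tensor product — this is a diagram chase using naturality of the splitting morphisms and coassociativity.

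The second and main step is to prove injectivity of the induced map on the balanced tensor products. The key structural input is that $\mathcal{S}_A(\Sigma_g^*)$ is a free $\mathcal{S}_A(\Sigma_g)$-module on one side, or better: the inclusion $\Sigma_g \hookrightarrow \Sigma_g^*$ (filling in the disc $D$) admits a splitting-type description, namely $\mathcal{S}_A(\Sigma_g^*) = \mathcal{S}_A(\Sigma_g) \otimes \mathcal{O}_q\SL_2$ as modules via a splitting morphism unglueing a bigon (up to choosing the right boundary edge), and similarly $\mathcal{S}_A(H_g^{*,i}) \cong \mathcal{S}_A(H_g^i) \otimes \mathcal{O}_q\SL_2$ compatibly, with the $\mathcal{O}_q\SL_2$-factor "carried by the marking." I would make this precise: the marked manifold $H_g^{*,i}$ is obtained from $H_g^i$ (closed handlebody, no markings) by removing a half-ball and adding a marked arc, and one checks via the splitting isomorphism that $\mathcal{S}_A(H_g^{*,i})$ is free as a left/right $\mathcal{S}_A(H_g^i)$-module with the comodule coaction identifying it with $\mathcal{S}_A(H_g^i) \otimes \mathcal{O}_q\SL_2$. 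Granting such compatible free-module decompositions on all three pieces $\Sigma_g, H_g^1, H_g^2$, with the $\mathcal{O}_q\SL_2$-factors matching up, the balanced tensor product $\mathcal{S}_A(H_g^{*,1}) \otimes_{\mathcal{S}_A(\Sigma_g^*)} \mathcal{S}_A(H_g^{*,2})$ becomes $\bigl(\mathcal{S}_A(H_g^1) \otimes_{\mathcal{S}_A(\Sigma_g)} \mathcal{S}_A(H_g^2)\bigr) \otimes \mathcal{O}_q\SL_2$ (the diagonal $\mathcal{O}_q\SL_2$ gets "absorbed" by the balancing, leaving one copy), and under this the original $\mathcal{S}_A(M) \cong \mathcal{S}_A(H_g^1) \otimes_{\mathcal{S}_A(\Sigma_g)} \mathcal{S}_A(H_g^2)$ sits inside as the "$1 \in \mathcal{O}_q\SL_2$" summand. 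Since $\mathcal{O}_q\SL_2$ is a free $k$-module containing $1$ as a basis element, and all tensor products are over $k$ in that last factor, $\iota$ is the inclusion of a direct summand, hence injective.

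I expect the main obstacle to be making the compatibility in the previous paragraph genuinely rigorous: one must verify that the $\mathcal{O}_q\SL_2$-comodule structures on $\mathcal{S}_A(H_g^{*,i})$ induced by unglueing the bigon at the marking $a_i$ are compatible with the $\mathcal{S}_A(\Sigma_g^*)$-module structures and with the gluing defining $\varphi_*$, so that the balanced tensor product really collapses one diagonal copy of $\mathcal{O}_q\SL_2$ rather than producing $\mathcal{O}_q\SL_2 \otimes_{?} \mathcal{O}_q\SL_2$. The cleanest way is probably to phrase everything in terms of the splitting morphism: the arc $a$ in $\partial B$ is cut by the disc $D \times [0,1]$ into $a_1$ and $a_2$, so $M^*$ is obtained from $M$ by a sequence of unglueings, and one can track the single bigon's worth of extra data through the Heegaard decomposition directly, avoiding choices. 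Once the bookkeeping is set up, injectivity is forced because over the ground ring $k$ the element $1 \in \mathcal{O}_q\SL_2$ is part of a free basis, so no element of $\mathcal{S}_A(M)$ can map to zero.
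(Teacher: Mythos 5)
Your opening reduction --- using Theorem~\ref{theorem_heegaard} to identify $\iota$ with the natural map $\mathcal{S}_A(H_g^1)\otimes_{\mathcal{S}_A(\Sigma_g)}\mathcal{S}_A(H_g^2)\to\mathcal{S}_A(H_g^{*,1})\otimes_{\mathcal{S}_A(\Sigma_g^*)}\mathcal{S}_A(H_g^{*,2})$ --- is exactly how the paper's proof begins, and that part is fine. The gap is the structural claim everything else rests on: it is not true that $\mathcal{S}_A(\Sigma_g^*)\cong\mathcal{S}_A(\Sigma_g)\otimes\mathcal{O}_q\SL_2$ and $\mathcal{S}_A(H_g^{*,i})\cong\mathcal{S}_A(H_g^i)\otimes\mathcal{O}_q\SL_2$ as modules with compatible diagonal coactions, nor is the conclusion $\mathcal{S}_A(M^*)\cong\mathcal{S}_A(M)\otimes\mathcal{O}_q\SL_2$ that your collapse argument would produce. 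Passing from the closed, unmarked piece to the marked piece is not an ``unglue a bigon'' operation --- there is no splitting morphism relating $\mathcal{S}_A(\Sigma_g)$ and $\mathcal{S}_A(\Sigma_g^*)$, since the closed surface has no marking to split along; it is the quantum analogue of passing from the character variety to the representation variety. Concretely, at $A=+1$ your handlebody claim reads $\mathcal{O}[R_{\SL_2}(H_g)]\cong\mathcal{O}[X_{\SL_2}(H_g)]\otimes\mathcal{O}[\SL_2]$, which fails already for $g=1$ ($\mathcal{O}[\SL_2]$ has Krull dimension $3$, while $\mathbb{C}[\tr]\otimes\mathcal{O}[\SL_2]$ has dimension $4$); and your final formula fails for $M=S^3$, where $\mathcal{S}_{+1}(M^*)\cong\mathcal{O}[R_{\SL_2}(S^3)]=\mathbb{C}$ rather than $\mathbb{C}\otimes\mathcal{O}[\SL_2]$. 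It would also contradict Theorem~\ref{theorem_Ibundle}, whose point is that the fibers of $\mathscr{I}^M$ are one-dimensional. The splitting morphism $\Delta_b^R$ only equips $\mathcal{S}_A(\mathbf{M})$ with an $\mathcal{O}_q\SL_2$-comodule structure; cofreeness of that comodule over the unmarked part is precisely what degenerates at reducible and central representations, i.e.\ on the locus this paper is written to handle.

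What the paper uses instead are two weaker facts that do hold: (i) L\^e's filtration $\mathcal{F}_n$ of the stated skein algebra of a marked surface by the number of tangle endpoints, whose degree-zero part is the unmarked skein algebra and for which the quotient $\mathcal{S}_A(\mathbf{\Sigma})\to Gr_\bullet$ is a linear isomorphism; and (ii) the fact that the empty link is a cyclic vector for each handlebody module, so that $\mathcal{S}_A(H_g^{*,1})$ and $\mathcal{S}_A(H_g^{*,2})$ are quotients of $\mathcal{A}=\mathcal{S}_A(\Sigma_g^*)$ by one-sided ideals $I_1,I_2$ compatible with the filtration. From these one checks directly that if a balancing relation $m_1a\otimes m_2-m_1\otimes am_2$ with $m_i\in M_i^0$ lands in $M_1^0\otimes_k M_2^0$, then $a$ can be replaced by its degree-zero component modulo $I_1+I_2$, giving $J\cap(M_1^0\otimes_k M_2^0)=J_0$ and hence injectivity. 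In other words, what you need is not a tensor factorization of the marked pieces over the unmarked ones, but a filtration-compatible splitting of the inclusion $\mathcal{F}_0\subset\mathcal{S}_A$ on each piece together with cyclicity of the handlebody modules; I would redirect your effort toward establishing (i) and (ii) rather than the $\mathcal{O}_q\SL_2$-trivialization.
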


 For $\mathbf{\Sigma}= (\Sigma, \mathcal{A})$ a marked surface, the algebra $\mathcal{S}_A(\mathbf{\Sigma})$ admits the following filtration introduced in \cite[Section $2.11$]{LeStatedSkein}. Let $\mathcal{F}_n := \Span_k( [T,s], | \partial T | \leq 2n )\subset \mathcal{S}_A(\mathbf{\Sigma})$ denote the span of classes of stated diagrams with at most $2n$ endpoints. Then $\mathcal{S}_A(\mathbf{\Sigma}) = \cup_{n\geq 0} \mathcal{F}_n$, $\mathcal{F}_n \subset \mathcal{F}_{n+1}$  and $\mathcal{F}_n \cdot \mathcal{F}_m \subset \mathcal{F}_{n+m}$. Note that $\mathcal{F}_0$ is the image of the usual skein algebra by the natural inclusion $\mathcal{S}_A(\Sigma) \hookrightarrow \mathcal{S}_A(\mathbf{\Sigma})$ which is known to be  injective for marked surfaces. Let $Gr_\bullet= \oplus_n Gr_n$ be the associated graded algebra where $Gr_0=\mathcal{F}_0$ and $Gr_n= \quotient{\mathcal{F}_n}{\mathcal{F}_{n-1}}$ for $n\geq 1$. It is shown in \cite{LeStatedSkein} that the quotient map $\pi_{\mathbf{\Sigma}}: \mathcal{S}_A(\mathbf{\Sigma}) \to Gr_{\bullet}$ is a linear isomorphism.
 
 \begin{proof}[Proof of Proposition \ref{prop_injection}]
 Consider a Heegaard splitting $M= H^1_g \cup_{\Sigma_g} H^2_g$. Write $\mathcal{A}:= \mathcal{S}_A(\Sigma_g^*)$, $M_1:= \mathcal{S}_A(H_g^{*, 1})$, $M_2:= \mathcal{S}_A(H_g^{*, 2})$ and consider the filtrations $\mathcal{A}= \cup_{n\geq 0} \mathcal{A}^{n}$, $M_1= \cup_{n\geq 0} M_1^{n}$ and $M_2= \cup_{n\geq 0} M_2^n$ as defined above where we consider $H_g^1, H_g^2$ as thickened holed discs. Recall that for thickened marked surface, the inclusion of skein into stated skein injective so $\mathcal{A}^0 = \mathcal{S}_A(\Sigma_g)$ and $M_i^0 = \mathcal{S}_A(H_g^i)$ for $i=1,2$. 
  By Theorem \ref{theorem_heegaard}, we have isomorphisms
 $$ \mathcal{S}_A(M) \cong M_1^0 \otimes_{\mathcal{A}^0} M_2^0 \quad \mbox{ and } \quad \mathcal{S}_A(M^*) \cong M_1 \otimes_\mathcal{A} M_2, $$
 so we need to prove that the inclusion $M_1^0 \otimes_k M_2^0 \subset M_1 \otimes_k M_2$ induces an injective morphism $M_1^0 \otimes_{\mathcal{A}^0} M_2^0 \to M_1 \otimes_\mathcal{A} M_2$. Writing 
 \begin{multline*} J:= \Span_k \left( m_1a \otimes m_2 - m_1 \otimes a m_2, \quad m_i \in M_i^0, a \in \mathcal{A} \right)\subset M_1\otimes_k M_2  \quad \mbox{ and } \\ 
 \quad J_0:= \Span_k \left( m_1a \otimes m_2 - m_1 \otimes a m_2, \quad m_i \in M_i^0, a \in \mathcal{A}^0 \right)\subset M_1^0\otimes_k M_2^0 , \end{multline*}
 this amounts to prove that $J \cap (M_1^0 \otimes_k M_2^0)= J_0$. The inclusion $J_0 \subset J \cap (M_1^0 \otimes_k M_2^0)$ is obvious.
 If $v_1 \in M_1$, $v_2 \in M_2$ represent the classes of the empty links, then $v_1, v_2$ are cyclic vectors, i.e. $M_1= v_1 \cdot A$ and $M_2= A \cdot v_2$ so if $I_1:= \ker(A \to M_1, a\mapsto v_1 \cdot a)$ and $I_2:= \ker(A \to M_2, a\mapsto a\cdot v_2)$ then as right and left modules we have $M_1\cong I_1\backslash {\raisebox{.2em}{$\mathcal{A}$}} $ and $M_2 \cong \quotient{\mathcal{A}}{I_2}$. By definition of the filtrations, $M_1^n \cong (I_1\cap \mathcal{A}^n) \backslash {\raisebox{.2em}{$\mathcal{A}^n$}}$ and $M_2^n \cong \quotient{\mathcal{A}^n}{(I_2\cap \mathcal{A}^n)}$. So if $a \in \mathcal{A}$ and $m_2 \in M_2^0$ are such that $am_2 \in M_2^0$ then $a=a_0 +a_2$ with $a_0 \in \mathcal{A}^0$ and $a_2 \in I_2$. Similarly if $a \in \mathcal{A}$ and $m_1 \in M_1^0$ are such that $m_1 a  \in M_1^0$ then $a=a_0 +a_1$ with $a_0 \in \mathcal{A}^0$ and $a_1 \in I_1$. This implies that if $a \in \mathcal{A}$, $m_1\in M_1^0$ and $m_2 \in M_2^0$ are such that $m_1a\otimes m_2 - m_1\otimes am_2 \in M_1^0\otimes_k M_2^0$ then $a=a_0 + a_1 + a_2$ with $a_0 \in \mathcal{A}^0$, $a_1\in I_1$ and $a_2 \in I_2$ so 
 $$ m_1a\otimes m_2 - m_1\otimes am_2 = m_1a_0 \otimes m_2 - m_1 \otimes a_0 m_2 \in J_0.$$
 This proves the inclusion $J \cap (M_1^0 \otimes_k M_2^0)\subset J_0$. So the morphism $M_1^0 \otimes_{A^0} M_2^0 \to M_1 \otimes_A M_2$ is injective and so is $\iota: \mathcal{S}_A(M) \to \mathcal{S}_A(M^*)$.
 \end{proof}
 
\subsection{The Reynolds operator}

Recall from Section \ref{sec_charvar} that $\mathcal{O}[R_{\SL_2}(M)]$ has a structure of $\mathcal{O}[\SL_2]$ comodule. Dually it admits a structure of $\SL_2$-module where $\mathcal{O}[X_{\SL_2}(M)]:=\mathcal{O}[R_{\SL_2}(M)]^{\SL_2}$ is the subset of invariant vectors. 
Said differently, the vector space $\mathcal{O}[R_{\SL_2}(M)]$ decomposes as a direct sum $\oplus_i n_i V_i$ over the simple $\SL_2$ modules $V_i$ and  $\mathcal{O}[X_{\SL_2}(M)]=n_0V_0$ is the factor corresponding to the trivial representation. The \textit{Reynolds operator} is the projection $\mathcal{R}: \mathcal{O}[R_{\SL_2}(M)] \to \mathcal{O}[X_{\SL_2}(M)]$ orthogonal to $\oplus_{i>0}n_i V_i$, i.e. the unique $\SL_2$ equivariant projection. Alternatively, it is commonly defined by the formula
$$ \mathcal{R}(f) = \int_{g \in \SU_2} g f(g^{-1}\bullet)d\mu(g), \quad \mbox{ for }f\in \mathcal{O}[R_{\SL_2}(M)], $$
where $d\mu$ represents the Haar measure.
In particular, it satisfies $\mathcal{R}(f_0 g) = f_0 \mathcal{R}(g)$ when $f_0\in \mathcal{O}[X_{\SL_2}(M)]$. The existence of the Reynolds operator implies the following classical lemma for which we recall the proof for the reader conveniance.

\begin{lemma}\label{lemma_Reynolds} For $I\subset  \mathcal{O}[X_{\SL_2}(M)]$ an ideal, we have $I  \mathcal{O}[R_{\SL_2}(M)] \cap  \mathcal{O}[X_{\SL_2}(M)] = I$. \end{lemma}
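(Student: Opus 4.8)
The plan is to exploit the splitting $\mathcal{O}[R_{\SL_2}(M)] = \mathcal{O}[X_{\SL_2}(M)] \oplus W$ of $\SL_2$-modules, where $W := \bigoplus_{i>0} n_i V_i$ is the sum of the non-trivial isotypic components, and where $\mathcal{R}$ is precisely the projection onto the first factor along $W$. The inclusion $I \subset I\mathcal{O}[R_{\SL_2}(M)] \cap \mathcal{O}[X_{\SL_2}(M)]$ is immediate since $I \subset \mathcal{O}[X_{\SL_2}(M)] \subset \mathcal{O}[R_{\SL_2}(M)]$ and $I\subset I\mathcal{O}[R_{\SL_2}(M)]$. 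So the content is the reverse inclusion.

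First I would take an arbitrary element $f \in I\mathcal{O}[R_{\SL_2}(M)] \cap \mathcal{O}[X_{\SL_2}(M)]$ and write $f = \sum_{k} a_k h_k$ with $a_k \in I$ and $h_k \in \mathcal{O}[R_{\SL_2}(M)]$. Applying the Reynolds operator and using its $\mathcal{O}[X_{\SL_2}(M)]$-linearity (the property $\mathcal{R}(f_0 g) = f_0\mathcal{R}(g)$ for $f_0 \in \mathcal{O}[X_{\SL_2}(M)]$ recalled just above the statement), together with the fact that $\mathcal{R}$ fixes $\mathcal{O}[X_{\SL_2}(M)]$ pointwise, I get
\[
f = \mathcal{R}(f) = \mathcal{R}\Bigl(\sum_k a_k h_k\Bigr) = \sum_k a_k \,\mathcal{R}(h_k).
\]
Since each $a_k \in I$ and each $\mathcal{R}(h_k) \in \mathcal{O}[X_{\SL_2}(M)]$, the right-hand side lies in $I$ (as $I$ is an ideal of $\mathcal{O}[X_{\SL_2}(M)]$). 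Hence $f \in I$, which gives the reverse inclusion and completes the proof.

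There is essentially no serious obstacle here; the only point requiring a word of care is the justification that $\mathcal{R}$ is $\mathcal{O}[X_{\SL_2}(M)]$-linear and restricts to the identity on $\mathcal{O}[X_{\SL_2}(M)]$. Both follow from the description of $\mathcal{R}$ as the canonical $\SL_2$-equivariant projection onto the trivial isotypic component: equivariance forces $\mathcal{R}$ to be a morphism of $\mathcal{O}[X_{\SL_2}(M)]$-modules because multiplication by an invariant function is $\SL_2$-equivariant and preserves the isotypic decomposition, and $\mathcal{R}$ acts as the identity on $V_0 = \mathcal{O}[X_{\SL_2}(M)]$ by construction. These are standard facts about reductive group actions (linear reductivity of $\SL_2$ in characteristic zero), and the paper has already recalled the needed formula, so the proof is a three-line computation once they are in hand.
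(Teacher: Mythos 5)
Your proof is correct and follows the same route as the paper: both apply the Reynolds operator to an element of $I\,\mathcal{O}[R_{\SL_2}(M)] \cap \mathcal{O}[X_{\SL_2}(M)]$ and use its $\mathcal{O}[X_{\SL_2}(M)]$-linearity together with the fact that it fixes invariants. If anything, your write-up is the cleaner of the two, since you correctly place the $h_k$ in $\mathcal{O}[R_{\SL_2}(M)]$ and apply $\mathcal{R}$ to them rather than to the ideal elements.
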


\begin{proof} If $f\in  I  \mathcal{O}[R_{\SL_2}(M)] $ then $f= \sum_i f_i h_i$ with $f_i \in I$ and $h_i \in \mathcal{O}[X_{\SL_2}(M)]$. Moreover if $f \in  \mathcal{O}[X_{\SL_2}(M)]$, then 
$$ f = \mathcal{R}(f) = \sum_i \mathcal{R}(f_i)h_i \in I.$$
This proves the inclusion $I  \mathcal{O}[R_{\SL_2}(M)] \cap  \mathcal{O}[X_{\SL_2}(M)] \subset I$. The reverse inclusion is obvious.
\end{proof}

This lemma implies the following.
\begin{lemma}\label{lemma_Reynolds2} Let $\rho \in R_{\SL_2}(M)$ and $[\rho]=\pi(\rho) \in X_{\SL_2}(M)$. Then the inclusion morphism $\iota : \mathcal{S}_{\zeta}(M) \hookrightarrow \mathcal{S}_{\zeta}(M^*)$ induces an injective morphism $\restriction{\iota}{\rho}: \mathcal{S}_{\zeta, [\rho]}(M) \hookrightarrow \mathcal{S}_{\zeta, \rho}(M^*)$. 
\end{lemma}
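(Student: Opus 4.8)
The plan is to deduce Lemma \ref{lemma_Reynolds2} from Proposition \ref{prop_injection} together with Lemma \ref{lemma_Reynolds}, using the fact that $\iota$ is compatible with the module structures and that the Frobenius morphisms intertwine the $\mathcal{O}[X_{\SL_2}(M)]$- and $\mathcal{O}[R_{\SL_2}(M)]$-module structures via the inclusion $\mathcal{O}[X_{\SL_2}(M)] \subset \mathcal{O}[R_{\SL_2}(M)]$. Concretely, set $R := \mathcal{O}[R_{\SL_2}(M)]$ and $X := \mathcal{O}[X_{\SL_2}(M)] \subset R$, and recall that $\mathcal{S}_{\zeta,[\rho]}(M) = \mathcal{S}_{\zeta}(M)/\mathfrak{m}_{[\rho]}\mathcal{S}_{\zeta}(M)$ and $\mathcal{S}_{\zeta,\rho}(M^*) = \mathcal{S}_{\zeta}(M^*)/\mathfrak{m}_{\rho}\mathcal{S}_{\zeta}(M^*)$, where $\mathfrak{m}_{[\rho]} \subset X$ and $\mathfrak{m}_\rho \subset R$ are the maximal ideals of $[\rho]$ and $\rho$, and where I abuse notation to write $\mathfrak{m}_{[\rho]}\mathcal{S}_\zeta(M)$ for $Fr(\mathfrak{m}_{[\rho]})\mathcal{S}_\zeta(M)$, etc. Since $\pi(\rho) = [\rho]$ we have $\mathfrak{m}_{[\rho]} = \mathfrak{m}_\rho \cap X$, and more importantly $\mathfrak{m}_{[\rho]} R \subset \mathfrak{m}_\rho$.

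The key step is to identify, inside $\mathcal{S}_\zeta(M^*)$, the image of the submodule $\mathfrak{m}_{[\rho]}\mathcal{S}_\zeta(M) \subset \mathcal{S}_\zeta(M)$ under $\iota$. First I would observe that $\mathcal{S}_\zeta(M^*)$ is a module over $\mathcal{S}_\zeta(M)$ compatibly: the $\mathcal{S}_{+1}(M)$-action on $\mathcal{S}_\zeta(M)$ and the $\mathcal{S}_{+1}(M^*)$-action on $\mathcal{S}_\zeta(M^*)$ are intertwined by $\iota$ and by the inclusion $\mathcal{S}_{+1}(M) \to \mathcal{S}_{+1}(M^*)$, because the Frobenius image of a curve in $M$ stays transparent in $M^*$ and $\iota$ is induced by the inclusion of $3$-manifolds. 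Hence $\iota(\mathfrak{m}_{[\rho]}\mathcal{S}_\zeta(M)) \subset \mathfrak{m}_{[\rho]}\mathcal{S}_\zeta(M^*) \subset \mathfrak{m}_\rho\mathcal{S}_\zeta(M^*)$, which already shows $\restriction{\iota}{\rho}$ is well-defined. For injectivity I need the reverse-type containment: I must show that if $x \in \mathcal{S}_\zeta(M)$ satisfies $\iota(x) \in \mathfrak{m}_\rho\mathcal{S}_\zeta(M^*)$, then in fact $x \in \mathfrak{m}_{[\rho]}\mathcal{S}_\zeta(M)$. Equivalently, $\iota^{-1}\bigl(\mathfrak{m}_\rho\mathcal{S}_\zeta(M^*)\bigr) = \mathfrak{m}_{[\rho]}\mathcal{S}_\zeta(M)$.

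To prove this equality I would use the Reynolds operator in the form of Lemma \ref{lemma_Reynolds}. The Reynolds projection $\mathcal{R}: R \to X$ is $\SL_2$-equivariant and $X$-linear; it induces (after applying $Fr_*$ and using $\SL_2$-equivariance of the comodule structures from Theorem \ref{theorem_classical}(2) and Theorem \ref{theorem_Frobenius}) a projection at the level of skein modules. The crucial point is that $\mathcal{S}_\zeta(M) = \iota^{-1}(\text{invariants of } \mathcal{S}_\zeta(M^*))$: the subspace $\mathcal{S}_\zeta(M) \subset \mathcal{S}_\zeta(M^*)$ should be exactly the $\mathcal{O}_q\SL_2$-coinvariants of $\mathcal{S}_\zeta(M^*)$ for the comodule structure $\Delta_a^R$ coming from the marking $a$ — this is the quantum analogue of $\mathcal{O}[X_{\SL_2}(M)] = \mathcal{O}[R_{\SL_2}(M)]^{\SL_2}$, and Proposition \ref{prop_injection} guarantees the inclusion is genuine. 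Then, given $x \in \mathcal{S}_\zeta(M)$ with $\iota(x) = \sum_i Fr_*(f_i) y_i$ with $f_i \in \mathfrak{m}_\rho$ and $y_i \in \mathcal{S}_\zeta(M^*)$, I apply the skein-level Reynolds projection $\mathcal{R}_\zeta: \mathcal{S}_\zeta(M^*) \to \mathcal{S}_\zeta(M)$; since $\mathcal{R}_\zeta$ is $\mathcal{S}_{+1}(M)$-linear (i.e. $X$-linear via $Fr$) and fixes $\iota(x)$, I get $x = \mathcal{R}_\zeta(\iota(x)) = \sum_i Fr(\mathcal{R}(f_i)\text{-part}) \cdot (\text{something in }\mathcal{S}_\zeta(M))$; decomposing $f_i = f_i^0 + f_i'$ with $f_i^0 \in \mathfrak{m}_{[\rho]}$ (by $\mathfrak{m}_{[\rho]} = \mathcal{R}(\mathfrak{m}_\rho)$, an application of Lemma \ref{lemma_Reynolds} with $I = \mathfrak{m}_{[\rho]}$, noting $\mathcal{R}(\mathfrak{m}_\rho) \subset \mathfrak{m}_{[\rho]}$) and $f_i'$ in the augmentation-type complement killed by the projection, one concludes $x \in Fr(\mathfrak{m}_{[\rho]})\mathcal{S}_\zeta(M)$, as desired.

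The main obstacle I anticipate is making the "skein-level Reynolds operator" $\mathcal{R}_\zeta$ rigorous and checking that it is both well-defined on $\mathcal{S}_\zeta(M^*)$ and restricts to the identity on the image of $\iota$ — in other words, verifying that $\mathcal{S}_\zeta(M^*)$ is a direct sum of $\mathcal{O}_q\SL_2$-comodules on which the coinvariant part is precisely $\iota(\mathcal{S}_\zeta(M))$, so that the averaging/Reynolds projection descends from $R$ to $\mathcal{S}_\zeta(M^*)$ via $Fr_*$. This uses that $Fr_*$ is a morphism of $\mathcal{O}_q\SL_2$-comodules (compatibly with $\Psi^S_*$ being one, per Theorem \ref{theorem_classical}(2)) and that the relevant comodule is cosemisimple, so the decomposition and hence the projection exist. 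Once this structural fact is in place, the rest is the bookkeeping with Lemma \ref{lemma_Reynolds} sketched above, together with Proposition \ref{prop_injection} to ensure $\iota$ is an honest inclusion so that the statement "$\restriction{\iota}{\rho}$ injective" is meaningful and follows.
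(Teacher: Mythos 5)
Your overall strategy --- reducing the lemma to the identity $\iota^{-1}\bigl(Fr_*(\mathfrak{m}_\rho)\mathcal{S}_\zeta(M^*)\bigr) = Fr(\mathfrak{m}_{[\rho]})\mathcal{S}_\zeta(M)$ and the correct treatment of well-definedness --- is sensible, but the implementation has gaps that are not merely technical. The central object of your argument, a skein-level Reynolds projection $\mathcal{R}_\zeta : \mathcal{S}_\zeta(M^*)\to\mathcal{S}_\zeta(M)$, is exactly the hard part, and the justification you offer for its existence fails: at a root of unity $\mathcal{O}_q\SL_2$ is \emph{not} cosemisimple, so there is no a priori decomposition of $\mathcal{S}_\zeta(M^*)$ into comodules with $\iota(\mathcal{S}_\zeta(M))$ as the coinvariant summand; moreover the identification of $\iota(\mathcal{S}_\zeta(M))$ with the $\mathcal{O}_q\SL_2$-coinvariants of $\mathcal{S}_\zeta(M^*)$ is itself an unproven and nontrivial claim --- the paper only establishes the analogous statement at $A=+1$ (Theorem \ref{theorem_classical}). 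The paper's proof deliberately avoids any quantum Reynolds operator: it works entirely at the classical level, applying Lemma \ref{lemma_Reynolds} to the morphism $i:\mathcal{S}_{+1}(M)\hookrightarrow\mathcal{S}_{+1}(M^*)$ to obtain $i^{-1}(\mathfrak{m}_\rho)=\mathfrak{m}_{[\rho]}$, and deduces the inclusion $\iota^{-1}(Fr_*(\mathfrak{m}_{\rho})\mathcal{S}_{\zeta}(M^*)) \subset Fr(\mathfrak{m}_{[\rho]}) \mathcal{S}_{\zeta}(M)$ from that, never leaving the $A=+1$ world for the averaging step.

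Two further steps in your bookkeeping are wrong even if $\mathcal{R}_\zeta$ existed. First, $\mathcal{R}(\mathfrak{m}_\rho)\subset\mathfrak{m}_{[\rho]}$ is false: take $M=S^1\times S^2$, $\gamma$ a generator of $\pi_1(M)\cong\mathbb{Z}$ and $\rho(\gamma)$ the diagonal matrix with entries $2$ and $1/2$; then $f=X^\gamma_{++}-2$ lies in $\mathfrak{m}_\rho$, but since $\int_{g\in\SU_2}g^{-1}Ag\,d\mu(g)$ is $\tfrac{1}{2}\tr(A)$ times the identity, one gets $\mathcal{R}(f)(\rho)=\tfrac{1}{2}\tr\rho(\gamma)-2=-\tfrac{3}{4}\neq 0$. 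Averaging over the orbit does not preserve vanishing at a single point of the orbit; Lemma \ref{lemma_Reynolds} concerns ideals generated by \emph{invariant} functions, and $\mathfrak{m}_\rho$ is not of that form. Second, even for $f'$ lying in a nontrivial isotypic component there is no reason for $\mathcal{R}_\zeta\bigl(Fr_*(f')\,y\bigr)$ to vanish: the projection kills non-invariant isotypic components, but a product of two non-invariant elements can perfectly well be invariant, and $\mathcal{R}_\zeta$ is only linear over $Fr(\mathcal{O}[X_{\SL_2}(M)])$, so $Fr_*(f_i)$ cannot be pulled through it. Hence the decomposition $f_i=f_i^0+f_i'$ does not deliver the conclusion. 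To repair the argument you should abandon the quantum Reynolds operator and argue as the paper does, transferring the whole averaging argument to $\mathcal{S}_{+1}$.
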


\begin{proof}
Let $\mathfrak{m}_{\rho}\subset \mathcal{S}_{+1}(M^*)$ and $\mathfrak{m}_{[\rho]} \subset \mathcal{S}_{+1}(M)$ be the maximal ideals corresponding to $\rho$ and $[\rho]$ respectively and consider the morphism $i: \mathcal{S}_{+1}(M)\hookrightarrow \mathcal{S}_{+1}(M^*)$ corresponding to  the morphism $\iota$ at $A=+1$. Using the identifications $\mathcal{S}_{+1}(M) \cong \mathcal{O}[X_{\SL_2}(M)]$ and $\mathcal{S}_{+1}(M^*) \cong \mathcal{O}[R_{\SL_2}(M)]$, Lemma \ref{lemma_Reynolds} implies the  equality $i^{-1}(\mathfrak{m}_{\rho})=\mathfrak{m}_{[\rho]}$. We thus  have the inclusion $\iota^{-1}(Fr_*(\mathfrak{m}_{\rho})\mathcal{S}_{\zeta}(M^*)) \subset Fr(\mathfrak{m}_{[\rho]}) \mathcal{S}_{\zeta}(M)$ so $\iota$ induces an injective morphism 
$$\restriction{\iota}{\rho}: \mathcal{S}_{\zeta, [\rho]}(M)=\quotient{\mathcal{S}_{\zeta}(M)}{Fr(\mathfrak{m}_{[\rho]}) \mathcal{S}_{\zeta}(M)}  \hookrightarrow \quotient{\mathcal{S}_{\zeta}(M^*)}{Fr(\mathfrak{m}_{\rho}) \mathcal{S}_{\zeta}(M^*)}=  \mathcal{S}_{\zeta, \rho}(M^*).$$
\end{proof}

\section{Azumaya loci of stated skein algebras}

A complex algebra $\mathscr{A}$ is called \textbf{almost Azumaya} if $(1)$ it is prime, $(2)$ it is affine (finitely generated as an algebra) and $(3)$ it is finitely generated as a module over its center $Z\subset \mathscr{A}$. Let $Q(Z)$ be the fraction field of $Z$ (obtained by localizing by every non zero element). Then $\mathscr{A}\otimes_Z Q(Z)$ is a central simple algebra with center $Q(Z)$, so there exists a finite field extension $Q(Z)\subset F$ such that $\mathscr{A}\otimes_Z F\cong \Mat_D(F)$ is a matrix algebra. The size $D$ is called the \textbf{PI-degree} of $\mathscr{A}$ and it is characterized by the formula $\dim_{Q(Z)} \mathscr{A}\otimes_Z Q(Z) = D^2$. 
\par Let $X:= \Specm(Z)$. For $x\in X$, we denote by $\mathfrak{m}_x \subset Z$ the corresponding maximal ideal and write $\mathscr{A}_x:= \quotient{\mathscr{A}}{\mathfrak{m}_x \mathscr{A}}$. The \textbf{Azumaya locus} of $\mathscr{A}$ is the subset: 
$$ \mathcal{AL} := \{ x \in X \mbox{ such that } \mathscr{A}_x \cong \Mat_D(\mathbb{C}) \}.$$
By \cite[Theorem III.I.7]{BrownGoodearl}, the Azumaya locus is an open dense subset. Let $\alpha_{\partial} \in \pi_1(\Sigma_{g,1}, v)$ be the class of a small loop encircling the boundary component once, so if $\alpha_1, \beta_1, \ldots, \alpha_g, \beta_g \in \pi_1(\Sigma_{g,1})$ are meridians and longitudes which freely generate $\pi_1(\Sigma_{g,1})$, then $\alpha_{\partial}=[\alpha_1, \beta_1] \ldots [\alpha_g, \beta_g]$. 
 The \textit{moment map} is the regular map  $\mu : R_{\SL_2}(\Sigma_{g,1}) \to \SL_2(\mathbb{C})$ defined by $\mu(\rho):= \rho(\alpha_{\partial})$. Recall the (simple) Bruhat cell decomposition $\SL_2(\mathbb{C})= \SL_2^{(0)} \sqcup \SL_2^{(1)}$ where $\SL_2^{(0)}=\left\{ \begin{pmatrix} a & b \\ c & d \end{pmatrix} \mbox{ such that } a\neq 0 \right\}$. 

\begin{theorem}\label{theorem_Azumaya} Let $\zeta$ be a root of unity of odd order $N\geq 3$. 
\begin{enumerate}
\item (\cite{FrohmanKaniaLe_UnicityRep, FrohmanKaniaLe_DimSkein}) $\mathcal{S}_{\zeta}(\Sigma_g)$ is almost Azumaya of PI-degree $D_{\Sigma_g}= N^{3g-3}$ if $g\geq 2$ and $D_{\Sigma_1}=N$ if $g=1$. Moreover $Fr: \mathcal{S}_{+1}(\Sigma_g) \to Z(\mathcal{S}_{\zeta}(\Sigma_g))$ is an isomorphism, so $\Specm(Z(\mathcal{S}_{\zeta}(\Sigma_g))) \cong X_{\SL_2}(\Sigma_g)$. 
\item (\cite{GanevJordanSafranov_FrobeniusMorphism, KojuKaruo_Azumaya}) The Azumaya locus of  $\mathcal{S}_{\zeta}(\Sigma_g)$ is the set $\mathcal{AL}(\Sigma_g)\subset X_{\SL_2}(\Sigma_g)$ of classes of non central representations.
\item (\cite{KojuAzumayaSkein}) $\mathcal{S}_{\zeta}(\Sigma_g^*)$ is almost Azumaya. 
\item (\cite{GanevJordanSafranov_FrobeniusMorphism}, see also \cite{KojuMCGRepQT, BaseilhacFaitgRoche_LGFT3}) $Fr_*: \mathcal{S}_{+1}(\Sigma_g^*) \to Z(\mathcal{S}_{\zeta}(\Sigma_g^*))$ is an isomorphism so $\Specm(Z(\mathcal{S}_{\zeta}(\Sigma_g^*)))\cong R_{\SL_2}(\Sigma_{g,1})$. Moreover $\mathcal{S}_{\zeta}(\Sigma_g^*)$ has  PI-degree $D_{\Sigma_g^*}= N^{3g}$ and its Azumaya locus is the subset $\mathcal{AL}(\Sigma_g^*) := \mu^{-1}(\SL_2^{(0)}) \subset R_{\SL_2}(\Sigma_{g,1})$.
\end{enumerate}
\end{theorem}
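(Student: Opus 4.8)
The final statement in the excerpt is Theorem~\ref{theorem_Azumaya}(4): that $Fr_*: \mathcal{S}_{+1}(\Sigma_g^*) \to Z(\mathcal{S}_{\zeta}(\Sigma_g^*))$ is an isomorphism (so $\operatorname{MaxSpec}(Z(\mathcal{S}_\zeta(\Sigma_g^*))) \cong R_{\SL_2}(\Sigma_{g,1})$), that the PI-degree is $N^{3g}$, and that the Azumaya locus is $\mu^{-1}(\SL_2^{(0)})$.

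Wait — this is cited as a known theorem from [GanevJordanSafranov_FrobeniusMorphism], not something proved in the paper. Let me reconsider.

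\textbf{Overall strategy.} The plan is to deduce all four items from two structural inputs. The first is an embedding of each stated skein algebra into a quantum torus attached to a triangulation (the balanced Chekhov--Fock algebra of L\^e in the unmarked case, and its stated analogue of L\^e and of Korinman--Quesney in the marked case): this immediately gives primeness and lets one read off both the module-finiteness over the center and the PI-degrees. The second is an analysis of the building block $\mathcal{S}_A(\mathbb{B})\cong\mathcal{O}_q\SL_2$ (at $q=\zeta^2$) together with the splitting morphisms, which reduces the computation of the center and of the Azumaya locus of $\mathcal{S}_\zeta(\Sigma_g^*)$ to the quantum moment map of the free group $\pi_1(\Sigma_{g,1})$.

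\textbf{Almost Azumaya, the center, and PI-degrees (items (1), (3) and the first half of (4)).} Affineness of $\mathcal{S}_\zeta(\Sigma_g)$ and $\mathcal{S}_\zeta(\Sigma_g^*)$ follows from finite generation of skein algebras of triangulated surfaces (the marked point contributes only finitely many states). Primeness follows from the quantum-torus embedding, since a subalgebra of a domain is a domain, hence prime. Module-finiteness over the center: in a quantum torus at a root of unity of odd order $N$, the $N$-th powers of the generators are central and the algebra is finite over the subalgebra they generate; intersecting with the image of $\mathcal{S}_\zeta$ and invoking the transparency of the Frobenius morphism (Theorem~\ref{theorem_Frobenius}) shows that $Fr$ (resp.\ $Fr_*$) has central image, over which the algebra is module-finite. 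This establishes "almost Azumaya". The map $Fr_*$ is injective (Theorem~\ref{theorem_Frobenius}) with central image; to see that its image is \emph{all} of the center, compare dimensions over the respective fraction fields: $\mathcal{S}_\zeta(\Sigma_g^*)$ shares its total fraction ring with the ambient quantum torus, so $\dim_{Q(Z)}\mathcal{S}_\zeta(\Sigma_g^*)\otimes_Z Q(Z)=N^{2r}$ with $2r$ the rank of the Poisson form, while the generic rank of $\mathcal{S}_\zeta(\Sigma_g^*)$ over $\operatorname{im}(Fr_*)$ equals the generic rank of $\mathscr{I}^{\Sigma_{g,1}}$ over $R_{\SL_2}(\Sigma_{g,1})$, which is $N^{\dim R_{\SL_2}(\Sigma_{g,1})}=N^{6g}$ because $\pi_1(\Sigma_{g,1})$ is free of rank $2g$; from $N^{6g}=(N^{3g})^2$ one concludes $Q(Z)=Q(\operatorname{im}(Fr_*))$, hence $Z=\operatorname{im}(Fr_*)$ by normality of $R_{\SL_2}(\Sigma_{g,1})$, and $D_{\Sigma_g^*}=N^{3g}$. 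The identical argument with $X_{\SL_2}$ replacing $R_{\SL_2}$ yields item (1), using $\dim X_{\SL_2}(\Sigma_g)=6g-6$ for $g\ge2$ (so $D=N^{3g-3}$) and $\dim X_{\SL_2}(\Sigma_1)=2$ (so $D=N$); the gluing isomorphism of Theorem~\ref{theorem_heegaard} gives a useful consistency check.

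\textbf{Azumaya loci (item (2) and the second half of (4)) --- the main obstacle.} For a closed surface one must show $\mathcal{S}_{\zeta,[\rho]}(\Sigma_g)\cong\Mat_{N^{3g-3}}(\mathbb{C})$ precisely when $\rho$ is non-central. When $[\rho]$ is irreducible it lies in the smooth locus of $X_{\SL_2}(\Sigma_g)$, and the Azumaya locus contains the smooth locus for these algebras (via the unicity results of Frohman--Kania-Bartoszynska--L\^e); the diagonal non-central classes are handled by an explicit computation of the reduced algebra through a genus-one decomposition (Korinman--Karuo). For $\Sigma_g^*$ the answer $\mu^{-1}(\SL_2^{(0)})$ is obtained by transporting, via the splitting morphisms, the corresponding statement for the quantum moment map of the free group: one checks directly that the fiber of $\mathcal{O}_{\zeta^2}\SL_2$ over $g\in\SL_2$, viewed as a module over its center $\mathcal{O}[\SL_2]$, is a full matrix algebra exactly on the big Bruhat cell $\SL_2^{(0)}$ and degenerates on $\SL_2^{(1)}$, and then pushes this along $\mu=[\rho(\alpha_1),\rho(\beta_1)]\cdots[\rho(\alpha_g),\rho(\beta_g)]$. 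The genuinely delicate part, in both cases, is the \emph{reverse} inclusion: proving that a representation in the bad locus (central for $\Sigma_g$, in $\mu^{-1}(\SL_2^{(1)})$ for $\Sigma_g^*$) is \emph{not} Azumaya. This requires an effective lower bound $\dim_{\mathbb{C}}\mathcal{S}_{\zeta,[\rho]}>D^2$ obtained by exhibiting explicitly many linearly independent classes in the reduced algebra, rather than any general-position argument, and this is where the bulk of the work --- and the reliance on the cited papers --- lies.
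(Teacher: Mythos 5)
This theorem is not proved in the paper at all: it is a black-box import, stated with citations to \cite{FrohmanKaniaLe_UnicityRep, FrohmanKaniaLe_DimSkein, GanevJordanSafranov_FrobeniusMorphism, KojuKaruo_Azumaya, KojuAzumayaSkein} and used as an input to Theorems \ref{theorem_Ibundle} and \ref{theorem_isom}. So there is no in-paper argument to compare yours against, and the correct move here --- which you half-recognize in your opening "Wait" --- would have been to identify the statement as cited rather than to attempt a reconstruction.

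Taken on its own terms, your sketch is a reasonable survey of the strategies used in the cited references, but it is not a proof, and two steps as written are genuinely shaky. First, the "embedding into a quantum torus attached to a triangulation" does not apply to the closed surface $\Sigma_g$: there is no ideal triangulation of a closed unmarked surface, and primeness and module-finiteness in item (1) are obtained in the literature by a different route (a filtration by Dehn--Thurston coordinates whose associated graded algebra is a monomial/domain algebra, plus the Chebyshev--Frobenius description of the center), so your uniform appeal to the quantum trace covers only $\Sigma_g^*$. Second, the assertion that the generic rank of $\mathcal{S}_{\zeta}(\Sigma_g^*)$ over $\operatorname{im}(Fr_*)$ "equals $N^{\dim R_{\SL_2}(\Sigma_{g,1})}$" is presented as automatic when it is exactly the nontrivial freeness statement (a basis of $\mathcal{S}_{\zeta}(\Sigma_g^*)$ as a free module of rank $N^{6g}$ over the Frobenius image) that the cited papers establish; without it your identification $Z=\operatorname{im}(Fr_*)$ and the value $D_{\Sigma_g^*}=N^{3g}$ are unsupported. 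Finally, for item (2) and the second half of (4) you correctly locate the hard direction (non-Azumaya-ness of the central, resp. $\mu^{-1}(\SL_2^{(1)})$, locus) but explicitly defer it to the references, so the proposal does not close the argument. In the context of this paper the appropriate treatment is simply to cite, as the author does.
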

Note that $R_{\SL_2}(\Sigma_g)$ identifies with the subset $\mu^{-1}(\mathds{1}_2) \subset R_{\SL_2}(\Sigma_{g,1})$. Under this identification, since $\mathds{1}_2\in \SL_2^{(0)}$, then $R_{\SL_2}(\Sigma_g)$ is included in the Azumaya locus of $\mathcal{S}_{\zeta}(\Sigma_g^*)$.

\section{Proof of the main theorems}
In all this section, we fix $\zeta$ a root of unity of odd order $N\geq 3$ and $M$ a connected closed $3$ manifold equipped with a genus $g$ Heegaard splitting $M\cong {H}^1_g \cup_{\Sigma_g} H^2_g$. 
The following is Exercise $II.5.8$ in Hartshorne's book \cite{Hart}.

\begin{lemma}\label{lemma_hartshorne} Let $X$ be a Noetherian reduced scheme over $\mathbb{C}$. Let $\mathcal{F}\to X$ be a coherent sheaf such that for all $x\in X$, then $\mathcal{F}_{| x}:= \mathcal{F} \otimes_{\mathcal{O}_X} \mathbb{C}_x$ is one dimensional. Then $\mathcal{F}\to X$ is a line bundle.
\end{lemma}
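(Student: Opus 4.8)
The plan is to deduce Lemma~\ref{lemma_hartshorne} from the standard characterization of coherent sheaves that are locally free of rank one, namely: a coherent sheaf $\mathcal{F}$ on a Noetherian scheme $X$ is a line bundle if and only if each stalk $\mathcal{F}_x$ is a free $\mathcal{O}_{X,x}$-module of rank one. Since freeness of the stalks can be checked after passing to the completion or, more elementarily, via Nakayama, the key point will be to combine the fibrewise one-dimensionality hypothesis with reducedness.

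First I would reduce to a local statement. Fix $x\in X$ with local ring $R:=\mathcal{O}_{X,x}$, maximal ideal $\mathfrak{m}$, residue field $\kappa(x)=\mathbb{C}$ (after base change to $\mathbb{C}$ the residue fields at closed points are $\mathbb{C}$; for non-closed points one works with $\kappa(x)$, but the argument is identical), and let $\mathcal{F}_x=:Q$, a finitely generated $R$-module. By hypothesis $Q\otimes_R \kappa(x)$ is one-dimensional, so by Nakayama's lemma $Q$ is generated by a single element, i.e. there is a surjection $R\twoheadrightarrow Q$; let $J\subset R$ be its kernel, so $Q\cong R/J$. I then need to show $J=0$.

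The crux is to upgrade the fibrewise dimension bound at the single point $x$ to a bound on a neighbourhood and then invoke reducedness. Concretely: since $\mathcal{F}$ is coherent, the function $y\mapsto \dim_{\kappa(y)}\mathcal{F}_y\otimes\kappa(y)$ is upper semicontinuous, and by hypothesis it is identically $1$; generic freeness (or simply that a finitely generated module over a reduced ring which has constant fibre dimension is locally free — this is exactly the content of Hartshorne Ex.~II.5.8) shows $\mathcal{F}$ is locally free of rank $1$ on a dense open $U$. For the point $x$ itself: the minimal primes $\mathfrak{p}_1,\dots,\mathfrak{p}_r$ of $R$ correspond to generic points of the irreducible components through $x$; localizing $Q\cong R/J$ at each $\mathfrak{p}_i$ gives $R_{\mathfrak{p}_i}/JR_{\mathfrak{p}_i}$, which must be one-dimensional over $\kappa(\mathfrak{p}_i)$, and since $R_{\mathfrak{p}_i}$ is a field (as $R$ is reduced, its localizations at minimal primes are fields), this forces $JR_{\mathfrak{p}_i}=0$ for every $i$. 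Hence $J$ is contained in every minimal prime of $R$, so $J\subset\bigcap_i\mathfrak{p}_i=\mathrm{nil}(R)=0$ because $R$ is reduced. Therefore $Q\cong R$, i.e. $\mathcal{F}_x$ is free of rank one, and as this holds at every point, $\mathcal{F}$ is a line bundle.

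The main obstacle, such as it is, is purely bookkeeping: making sure that the reducedness of $X$ transfers to reducedness of each local ring $\mathcal{O}_{X,x}$ (standard: localization of a reduced ring is reduced) and that the fibre-dimension hypothesis applied at minimal primes genuinely kills $J$ there — the subtlety being that at a minimal prime $\mathfrak{p}_i$ the fibre $\mathcal{F}\otimes\kappa(\mathfrak{p}_i)$ is $R_{\mathfrak{p}_i}/JR_{\mathfrak{p}_i}$ which is one-dimensional over the field $R_{\mathfrak{p}_i}=\kappa(\mathfrak{p}_i)$ precisely when $JR_{\mathfrak{p}_i}=0$. Everything else is the routine Nakayama argument. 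Since this is Exercise~II.5.8 in \cite{Hart}, I would in practice simply cite it, but the sketch above is the proof.
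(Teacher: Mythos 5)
Your argument is correct and follows essentially the same route as the paper: Nakayama's lemma produces a surjection from the structure sheaf onto the stalk, and the fibre-dimension hypothesis at generic points forces the kernel into every (minimal) prime, hence into the nilradical, which vanishes by reducedness. The only cosmetic difference is that you work entirely in the local ring $\mathcal{O}_{X,x}$ and its minimal primes, whereas the paper spreads the surjection out over a distinguished open $D(s)$ and intersects over all primes there; both yield the same conclusion.
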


\begin{proof}
Let $x\in X$ and let us prove that $\mathcal{F}$ if free of rank one in a neighborhood of $x$. Since the statement is local and $\mathcal{F}$ is coherent, we can suppose that $X=\Spec(A)$ is affine and that $\mathcal{F}=\widetilde{M}$ is associated to an $A$ module $M$. Let $\mathfrak{p} \subset A$ be the prime ideal corresponding to $x$.
Let $\mathcal{F}_x$ denote the stalk of $\mathcal{F}$ at $x$. Then $\mathcal{F}_x \otimes_{\mathcal{O}_{X,x}} \mathbb{C}_x \cong \mathcal{F}_{| x}$ is one dimensional so $\mathcal{F}_x$ is a cyclic $\mathcal{O}_{X,x}$ module. By Nakayama lemma, the same is true in a neighborhood of $x$. More precisely, there exists a generator $n$ of $\mathcal{F}_x$ of the form $n=\sum_i \frac{a_i}{s_i}m_i$ with $m_i \in \mathcal{F}$ and $a_i \in A$, $s_i \in A\setminus \mathfrak{p}$. Let $s:=\prod_i s_i$ and consider the neighborhood $D(s)$ of $x$. We have then an exact sequence:
$$ 0 \to \ker{\varphi} \to A_s \xrightarrow{\varphi} M_s \to 0, $$
which also holds when localized in any $\mathfrak{q} \in D(s)$. By assumption,  for all $\mathfrak{q} \in D(s)$ we have $\dim(M_{\mathfrak{q}}\otimes \mathbb{C}_{\mathfrak{q}})=1$ so $\dim(\ker{\varphi}_{\mathfrak{q}}\otimes \mathbb{C}_{\mathfrak{q}})=0$. So every element of $\ker{\varphi}$ is a sum of elements of $\mathfrak{q}A_s$ with $\mathfrak{q} \in D(s)$, so is in the nilradical of $A_s$. Since $X$ is reduced, so is $A_s$ thus $\ker{\varphi}=0$ and $M_s$ is free of rank one over $A_s$.
\end{proof}

\begin{lemma}\label{lemma_Hg} Let $\rho: \pi_1(H_g) \to \SL_2(\mathbb{C})$ be a representation. Then $\mathcal{S}_{\zeta, \rho}(H_g^*)$ has dimension $N^{3g}$. 
\end{lemma}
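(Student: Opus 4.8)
The plan is to compute $\dim \mathcal{S}_{\zeta, \rho}(H_g^*)$ by exploiting that $H_g^*$ is (as a marked $3$-manifold) a thickened marked surface, so that the stated skein module in question is a module over the almost Azumaya algebra $\mathcal{S}_\zeta(\Sigma_g^*)$ whose Azumaya locus is understood by Theorem \ref{theorem_Azumaya}(4). First I would identify $\overline{H_g^1\setminus E_1}$ with $\Sigma_{g}^*\times[0,1]$ — a handlebody with one ball removed and one marking is diffeomorphic to the thickening of a genus-$g$ surface with one puncture — so that $\mathcal{S}_\zeta(H_g^*)\cong \mathcal{S}_\zeta(\Sigma_g^*)$ as algebras, and correspondingly $\mathcal{S}_{+1}(H_g^*)\cong\mathcal{O}[R_{\SL_2}(\Sigma_{g,1})]$. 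Under this identification the representation $\rho:\pi_1(H_g)\to\SL_2$ corresponds to a point of $R_{\SL_2}(\Sigma_{g,1})$ lying in the slice $\mu^{-1}(\mathds{1}_2)\cong R_{\SL_2}(\Sigma_g)$, because $\pi_1(H_g)$ is free and the boundary loop $\alpha_\partial$ of the Heegaard surface bounds a disc in $H_g$, hence is sent to $\mathds{1}_2$. The module $\mathcal{S}_{\zeta,\rho}(H_g^*)$ is then precisely the fiber $\mathcal{S}_\zeta(\Sigma_g^*)_x := \mathcal{S}_\zeta(\Sigma_g^*)/\mathfrak{m}_x\mathcal{S}_\zeta(\Sigma_g^*)$ at the point $x=\rho\in R_{\SL_2}(\Sigma_{g,1})$, where I use that $Fr_*$ identifies $\mathcal{S}_{+1}(\Sigma_g^*)$ with the center, so $Fr_*(\mathfrak{m}_\rho)\mathcal{S}_\zeta(\Sigma_g^*)=\mathfrak{m}_x\mathcal{S}_\zeta(\Sigma_g^*)$.

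Next I would invoke the remark immediately following Theorem \ref{theorem_Azumaya}: since $\mathds{1}_2\in\SL_2^{(0)}$, the entire slice $\mu^{-1}(\mathds{1}_2)=R_{\SL_2}(\Sigma_g)$ lies inside the Azumaya locus $\mathcal{AL}(\Sigma_g^*)=\mu^{-1}(\SL_2^{(0)})$ of $\mathcal{S}_\zeta(\Sigma_g^*)$. Therefore $x$ is an Azumaya point, which means by definition that $\mathcal{S}_\zeta(\Sigma_g^*)_x\cong \Mat_D(\mathbb{C})$ with $D=D_{\Sigma_g^*}=N^{3g}$, and this matrix algebra has $\mathbb{C}$-dimension $D^2=N^{6g}$.

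There is one discrepancy to reconcile: the statement claims dimension $N^{3g}$, not $N^{6g}$. I expect the resolution is that $\mathcal{S}_{\zeta,\rho}(H_g^*)$ is not the full fiber algebra but a natural cyclic module over it — concretely, the span of the images of the $N$-fold parallel pushoffs acting on the empty link, equivalently $\mathcal{S}_\zeta(\Sigma_g^*)\otimes_{\mathcal{S}_{+1}(\Sigma_g^*)}\mathbb{C}_\rho$ viewed through the handlebody module structure rather than the algebra structure. Over an Azumaya point $\Mat_D(\mathbb{C})$, any nonzero cyclic module generated by a single element is (up to the module structure) a simple module $\mathbb{C}^D$ of dimension $D=N^{3g}$; the empty-link vector generates such a simple summand because, as in the proof of Proposition \ref{prop_injection}, it is a cyclic vector for the handlebody. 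The main obstacle, then, is to pin down precisely which module structure is meant and to verify that the generating vector indeed lands in (or generates) an irreducible $\Mat_{N^{3g}}(\mathbb{C})$-module rather than a larger one — i.e. that the surjection from the fiber algebra onto $\mathcal{S}_{\zeta,\rho}(H_g^*)$ has the right kernel. I would handle this by a dimension count at a \emph{generic} $\rho$, where classical results on skein modules of handlebodies (the Frobenius-free fiber has the expected dimension $N^{3g}$ by the Azumaya/simple-module structure together with the rank of $\mathcal{S}_\zeta(H_g^*)$ as a module over its center), and then upper-semicontinuity of fiber dimension for the coherent sheaf $\mathscr{I}$ forces equality at every $\rho$ in the slice once the generic value and the constancy on the Azumaya locus are both established.
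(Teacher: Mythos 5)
Your opening identification is incorrect, and neither of the two patches you propose to reconcile the $N^{6g}$ versus $N^{3g}$ discrepancy works. First, $H_g^{*}$ is a genus-$g$ handlebody with one boundary edge, i.e.\ a thickened $(g{+}1)$-holed sphere with one marked point (this is exactly how the paper treats it in the proof of Proposition \ref{prop_injection}); it is \emph{not} $\Sigma_g^*\times[0,1]$, which is a handlebody of genus $2g$. Consequently $\mathcal{S}_{+1}(H_g^*)\cong\mathcal{O}[R_{\SL_2}(H_g)]\cong\mathcal{O}[\SL_2^{\times g}]$, and $\mathcal{S}_{\zeta,\rho}(H_g^*)$ is by definition the fiber of $\mathcal{S}_\zeta(H_g^*)$ over the point $\rho\in\SL_2^{\times g}$ — not the Azumaya fiber $\Mat_{N^{3g}}(\mathbb{C})$ of $\mathcal{S}_\zeta(\Sigma_g^*)$. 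Second, your fallback claim that a nonzero cyclic module over $\Mat_D(\mathbb{C})$ must be the simple module $\mathbb{C}^D$ is false: the regular module is cyclic of dimension $D^2$. Cyclicity of the empty-link vector therefore only bounds the dimension above by $D^2$, and identifying which cyclic quotient occurs is precisely the content of the lemma; indeed the proof of Theorem \ref{theorem_Ibundle} \emph{uses} this lemma to rule out the larger cyclic modules, so arguing in the other direction is circular. Third, semicontinuity runs the wrong way for your purposes: fiber dimension of a coherent sheaf is upper semicontinuous, so a generic value of $N^{3g}$ yields only $\dim\mathcal{S}_{\zeta,\rho}(H_g^*)\geq N^{3g}$ at special points (e.g.\ central $\rho$, the very case the paper needs), never equality.

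The paper's proof is a single citation resting on an input your argument lacks: by Lemma $8.9$ of \cite{KojuKaruo_RepRSSkein}, $\mathcal{S}_\zeta(H_g^*)$ is \emph{free} of rank $N^{3g}$ over the image $Z^0_{H_g^*}$ of the Frobenius morphism, i.e.\ over $\mathcal{O}[\SL_2^{\times g}]$. Freeness — not mere coherence plus generic information — is what forces every fiber, including the non-generic ones, to have dimension exactly $N^{3g}$. To make your route work you would have to supply a substitute for this freeness statement (for instance an explicit basis of $\mathcal{S}_\zeta(H_g^*)$ over $Fr_*(\mathcal{S}_{+1}(H_g^*))$); the Azumaya property of $\mathcal{S}_\zeta(\Sigma_g^*)$ from Theorem \ref{theorem_Azumaya} by itself does not control the special fibers of the handlebody module.
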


\begin{proof} This follows from the fact, proved in \cite[Lemma $8.9$]{KojuKaruo_RepRSSkein}, that $\mathcal{S}_{\zeta}(H_g^*)$ is free over $Z^0_{H_g^*}$ of rank $N^{3g}$.
\end{proof}

\begin{theorem}\label{theorem_Ibundle} For every $\rho \in R_{\SL_2}(M)$ then $\dim(\mathcal{S}_{\zeta, \rho}(M^*))=1$.  If $R_{\SL_2}(M)$ is reduced, the sheaf $\mathscr{I}^M \to R_{\SL_2}(M)$ is a line bundle.
\end{theorem}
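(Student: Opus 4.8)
The strategy is to reduce the global statement about $\mathscr{I}^M$ to the local dimension computation via Lemma \ref{lemma_hartshorne}. Once we know that $\dim(\mathcal{S}_{\zeta,\rho}(M^*))=1$ for every closed point $\rho \in R_{\SL_2}(M)$, and that $R_{\SL_2}(M)$ is reduced (hence Noetherian, being of finite type over $\mathbb{C}$), Lemma \ref{lemma_hartshorne} applied to $\mathcal{F}=\mathscr{I}^M$ gives immediately that $\mathscr{I}^M$ is a line bundle, because $\mathscr{I}^M_{|\rho} = \mathscr{I}^M \otimes_{\mathcal{O}} \mathbb{C}_\rho$ is exactly $\mathcal{S}_{\zeta}(M^*)/Fr_*(\mathfrak{m}_\rho)\mathcal{S}_{\zeta}(M^*) = \mathcal{S}_{\zeta,\rho}(M^*)$ by right-exactness of the tensor product. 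So the whole content is the pointwise claim $\dim(\mathcal{S}_{\zeta,\rho}(M^*))=1$.

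To compute $\dim(\mathcal{S}_{\zeta,\rho}(M^*))$ I would use the Heegaard splitting. By Corollary \ref{coro_heegaard} we have
$$ \mathcal{S}_{\zeta,\rho}(M^*) \cong \mathcal{S}_{\zeta,\rho_1}(H_g^{*,1}) \otimes_{\mathcal{S}_{\zeta,\rho_\Sigma}(\Sigma_g^*)} \mathcal{S}_{\zeta,\rho_2}(H_g^{*,2}), $$
where $\rho_1, \rho_2, \rho_\Sigma$ are the restrictions of $\rho$ to the two handlebody subgroups and the surface subgroup. By Lemma \ref{lemma_Hg}, each of $\mathcal{S}_{\zeta,\rho_i}(H_g^{*,i})$ has dimension $N^{3g}$. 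The key input is Theorem \ref{theorem_Azumaya}(4): $\mathcal{S}_{\zeta}(\Sigma_g^*)$ is almost Azumaya of PI-degree $N^{3g}$ with center $\mathcal{S}_{+1}(\Sigma_g^*)\cong \mathcal{O}[R_{\SL_2}(\Sigma_{g,1})]$, and $\rho_\Sigma$ lies in $R_{\SL_2}(\Sigma_g) = \mu^{-1}(\mathds{1}_2) \subset \mu^{-1}(\SL_2^{(0)})$, which is the Azumaya locus. Hence $\mathcal{S}_{\zeta,\rho_\Sigma}(\Sigma_g^*) \cong \Mat_{N^{3g}}(\mathbb{C})$. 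Since a matrix algebra is simple, its unique simple module $S$ has dimension $N^{3g}$, and every finite-dimensional module is a direct sum of copies of $S$; the right module $\mathcal{S}_{\zeta,\rho_1}(H_g^{*,1})$ of dimension $N^{3g}$ is therefore $\cong S^*$ (one copy of the simple right module), and likewise the left module $\mathcal{S}_{\zeta,\rho_2}(H_g^{*,2}) \cong S$. Then $S^* \otimes_{\Mat_{N^{3g}}(\mathbb{C})} S \cong \mathbb{C}$, which gives $\dim(\mathcal{S}_{\zeta,\rho}(M^*))=1$.

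There is one gap to close carefully: I need to know that $\mathcal{S}_{\zeta,\rho_i}(H_g^{*,i})$, viewed as a module over $\mathcal{S}_{\zeta}(\Sigma_g^*)$, factors through the quotient $\mathcal{S}_{\zeta,\rho_\Sigma}(\Sigma_g^*)$ — i.e. that the action of $\mathcal{S}_{\zeta}(\Sigma_g^*)$ on the reduced handlebody skein module kills $Fr_*(\mathfrak{m}_{\rho_\Sigma})$. This should follow from compatibility of the module structures with the Frobenius morphisms under restriction of representations along $\pi_1(\Sigma_g)\to \pi_1(H_g^i)$, together with the fact that reducing at $\rho_i$ already imposes the relations coming from $\rho_\Sigma$ on the overlap; concretely, $\mathfrak{m}_{\rho_\Sigma}$ maps into $\mathfrak{m}_{\rho_i}$ under $\mathcal{O}[R_{\SL_2}(H_g^i)] \to \mathcal{O}[R_{\SL_2}(\Sigma_g)]$ pulled back (or rather the restriction map goes the other way, and one checks the ideal generated by the image of $\mathfrak{m}_{\rho_\Sigma}$ lands in $\mathfrak{m}_{\rho_i}$). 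This is the main obstacle, though it is essentially bookkeeping given the equivariance statements already recorded after Theorem \ref{theorem_heegaard}. Granting it, the tensor-product-over-a-matrix-algebra computation above finishes the proof, and then Lemma \ref{lemma_hartshorne} upgrades the pointwise statement to the line bundle conclusion under the reducedness hypothesis on $R_{\SL_2}(M)$.
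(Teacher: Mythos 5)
Your proposal is correct and follows essentially the same route as the paper: Heegaard splitting plus Corollary \ref{coro_heegaard}, the Azumaya property of $\mathcal{S}_{\zeta}(\Sigma_g^*)$ at $\rho_\Sigma$ since $\mu(\rho_\Sigma)=\mathds{1}_2\in\SL_2^{(0)}$, the dimension count of Lemma \ref{lemma_Hg}, the $\mathbb{C}^D\otimes_{\Mat_D(\mathbb{C})}\mathbb{C}^D\cong\mathbb{C}$ computation, and finally Lemma \ref{lemma_hartshorne}. The ``gap'' you flag (that the handlebody modules factor through the quotient $\mathcal{S}_{\zeta,\rho_\Sigma}(\Sigma_g^*)$) is already built into the statement of Corollary \ref{coro_heegaard}, which the paper likewise takes as the starting point, so nothing further is needed.
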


\begin{proof} Let $\rho: \pi_1(M) \to \SL_2$ be a representation and denote by $\rho_1, \rho_2, \rho_{\Sigma}$ the induced representations of $\pi_1({H}^1_g), \pi_1(H^2_g)$ and $\pi_1(\Sigma_{g})$ respectively as in Corollary \ref{coro_heegaard}. 
Here we consider $\rho_{\Sigma}$ as a representation of $\pi_1(\Sigma_{g,1})$ as well through the inclusion $\pi_1(\Sigma_g) \hookrightarrow \pi_1(\Sigma_{g,1})$.
In particular $\rho_{\Sigma}(\alpha_{\partial})=\mathds{1}_2\in \SL_2^{(0)}$ so, by Theorem \ref{theorem_Azumaya}, $\rho_{\Sigma}$ belongs to the Azumaya locus of $\mathcal{S}_{\zeta}(\Sigma_g^*)$ so $\mathcal{S}_{\zeta, \rho_{\Sigma}}(\Sigma_g^*)\cong \Mat_{D}(\mathbb{C})$ where $D=N^{3g}$. By Lemma \ref{lemma_Hg}, the left and right modules $\mathcal{S}_{\zeta, \rho_2}(H_g^{*, 2})$ and $\mathcal{S}_{\zeta, \rho_1}({H}_g^{*, 1})$ have dimension $D$. Since the only left (resp. right) non trivial $\Mat_{D}(\mathbb{C})$ module of dimension $D$ is the standard left (resp. right) module $\mathbb{C}^D$, Corollary \ref{coro_heegaard} implies 
$$\mathcal{S}_{\zeta, \rho}(M^*) \cong \mathcal{S}_{\zeta, \rho_1}(H_g^{*, 1}) \otimes_{\mathcal{S}_{\zeta, \rho_{\Sigma}}(\Sigma_g^*)} \mathcal{S}_{\zeta, \rho_2}(H_g^{*, 2}) \cong \mathbb{C}^D\otimes_{\Mat_D(\mathbb{C})} \mathbb{C}^D \cong \mathbb{C}.$$
So every quotient  $\restriction{\mathscr{I}^M}{\rho}= \mathcal{S}_{\zeta, \rho}(M^*)$ is one dimensional and $\mathscr{I}^M$ is a line bundle by Lemma \ref{lemma_hartshorne}.

\end{proof}

Let $[\rho]\in X_{\SL_2}(M)$ be a non central representation and let $D$ be the PI-degree of $\mathcal{S}_{\zeta}(\Sigma_g)$. By Theorem \ref{theorem_Azumaya}, $[\rho_{\Sigma}]$ belongs to the Azumaya locus of $\mathcal{S}_{\zeta}(\Sigma_g)$. Moreover, it is proved in \cite{FKL_GeometricSkein, KojuKaruo_Azumaya} that $\mathcal{S}_A(H_g)$ has dimension $D$. So using Theorem \ref{theorem_Azumaya}, like in the previous proof,  one has 
$$\mathcal{S}_{\zeta, [\rho]}(M) \cong \mathcal{S}_{\zeta, [\rho_1]}(H_g) \otimes_{\mathcal{S}_{\zeta, [\rho_{\Sigma}]}(\Sigma_g)} \mathcal{S}_{\zeta, [\rho_2]}(H_g) \cong \mathbb{C}^D\otimes_{\Mat_D(\mathbb{C})} \mathbb{C}^D \cong \mathbb{C}.$$

We thus get the 

\begin{theorem}\label{theorem_FKLKK}(\cite{FKL_GeometricSkein, KojuKaruo_Azumaya}) For $[\rho]$ the class of either an irreducible or a diagonal representation, then $\dim(\mathcal{S}_{\zeta, [\rho]}(M))=1$.  So if $X_{\SL_2}(M)$ is reduced, the restriction of $\mathscr{L}^M \to X_{\SL_2}(M)$ to the locus of non central representations is a line bundle.
\end{theorem}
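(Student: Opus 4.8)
The first assertion is precisely the computation displayed above, and the plan is to organize it and then feed its output into Lemma \ref{lemma_hartshorne}; the hypothesis ``$[\rho]$ the class of an irreducible or a diagonal representation'' is understood to mean that $[\rho]$ is non-central, equivalently that $\rho$ is irreducible or diagonal and non-central. I would first record that this is exactly the non-central locus: a reducible $\rho$ is conjugate into the upper-triangular Borel, and in the GIT quotient $X_{\SL_2}(M)=R_{\SL_2}(M)\sslash\SL_2$ its class coincides with that of its diagonal semisimplification, so every non-central class is the class of an irreducible or of a diagonal non-central representation, and conversely.

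Fix such a $[\rho]$ together with the Heegaard splitting $M\cong H^1_g\cup_{\Sigma_g}H^2_g$, and let $\rho_1,\rho_2,\rho_\Sigma$ be the induced representations of $\pi_1(H^1_g)$, $\pi_1(H^2_g)$, $\pi_1(\Sigma_g)$ as in Corollary \ref{coro_heegaard}. The single point that genuinely needs an argument is that $\rho_\Sigma$ is again non-central: each map $\pi_1(\Sigma_g)\to\pi_1(H^i_g)$ is surjective (attaching handles to $\Sigma_g\times[0,1]$ adds no generators), and the two composites $\pi_1(\Sigma_g)\to\pi_1(H^i_g)\to\pi_1(M)$ agree by commutativity of the Van Kampen square, so $\pi_1(\Sigma_g)\to\pi_1(M)$ is onto; hence if $\rho(\gamma_0)\neq\pm\mathds{1}_2$ for some $\gamma_0\in\pi_1(M)$, any preimage of $\gamma_0$ in $\pi_1(\Sigma_g)$ witnesses $\rho_\Sigma$ being non-central. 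By Theorem \ref{theorem_Azumaya}(2), $[\rho_\Sigma]$ then lies in the Azumaya locus of $\mathcal{S}_\zeta(\Sigma_g)$, so $\mathcal{S}_{\zeta,[\rho_\Sigma]}(\Sigma_g)\cong\Mat_D(\mathbb{C})$ with $D=D_{\Sigma_g}$ the PI-degree, while $\mathcal{S}_{\zeta,[\rho_1]}(H_g)$ and $\mathcal{S}_{\zeta,[\rho_2]}(H_g)$ are $D$-dimensional by \cite{FKL_GeometricSkein,KojuKaruo_Azumaya}. Since the only $D$-dimensional left (resp. right) $\Mat_D(\mathbb{C})$-module is the standard module $\mathbb{C}^D$, Corollary \ref{coro_heegaard} yields
$$\mathcal{S}_{\zeta,[\rho]}(M)\cong\mathcal{S}_{\zeta,[\rho_1]}(H_g)\otimes_{\mathcal{S}_{\zeta,[\rho_\Sigma]}(\Sigma_g)}\mathcal{S}_{\zeta,[\rho_2]}(H_g)\cong\mathbb{C}^D\otimes_{\Mat_D(\mathbb{C})}\mathbb{C}^D\cong\mathbb{C},$$
which is the first assertion.

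For the second assertion, suppose $X_{\SL_2}(M)$ is reduced and let $U\subset X_{\SL_2}(M)$ be the complement of the central locus; it is open (the central classes form a closed subset, namely the finite set of characters of homomorphisms $\pi_1(M)\to\{\pm\mathds{1}_2\}$), and it is reduced since $X_{\SL_2}(M)$ is. The restriction of $\mathscr{L}^M$ to $U$ is coherent, and by the previous paragraph each of its fibres $\mathscr{L}^M\otimes_{\mathcal{O}}\mathbb{C}_x$, $x\in U$, is one-dimensional; applying Lemma \ref{lemma_hartshorne} to $U$ shows this restriction is a line bundle. The only delicate step in the whole argument is the non-centrality of $\rho_\Sigma$ used above, since without it $[\rho_\Sigma]$ could fall outside the surface Azumaya locus and the tensor product would fail to collapse to $\mathbb{C}$; that, together with correctly importing the dimension count $\dim\mathcal{S}_{\zeta,[\rho_i]}(H_g)=D$ for the handlebodies, is all that is needed beyond Theorem \ref{theorem_Azumaya}, Corollary \ref{coro_heegaard} and Lemma \ref{lemma_hartshorne}, after which everything is formal.
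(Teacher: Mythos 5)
Your proof is correct and follows the paper's own argument essentially verbatim: the Heegaard splitting combined with Theorem \ref{theorem_Azumaya}, the dimension count $\dim\mathcal{S}_{\zeta,[\rho_i]}(H_g)=D$ for the handlebodies, the collapse $\mathbb{C}^D\otimes_{\Mat_D(\mathbb{C})}\mathbb{C}^D\cong\mathbb{C}$, and Lemma \ref{lemma_hartshorne}. Your only additions are details the paper leaves implicit --- the surjectivity of $\pi_1(\Sigma_g)\to\pi_1(M)$ guaranteeing that $\rho_\Sigma$ is non-central, and the openness of the non-central locus --- both of which are correct and worth recording.
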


When $\rho$ is a central representation, then $[\rho_{\Sigma}]$ does not belong to the Azumaya locus of $\mathcal{S}_{\zeta}(\Sigma_g)$ and the module $\mathcal{S}_{\zeta, [\rho]}(H_g)$ is not well understood (though it contains a well-understood simple quotient which is the space $V(\Sigma_g)$ of the Witten-Reshetikhin-Turaev TQFT) so the previous strategy is hard to handle for central representations.

\begin{lemma}\label{lemma_preliminary} If $\rho, \rho'$ are two central representations, then $\mathcal{S}_{\zeta, [\rho]}(M)\cong \mathcal{S}_{\zeta, [\rho']}(M)$ and $\dim(\mathcal{S}_{\zeta, [\rho]}(M))\geq 1$. 
\end{lemma}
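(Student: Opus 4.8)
The plan is to exploit two independent pieces of structure: first, that all central representations are related by the ``change of spin structure'' / transparency phenomenon, and second, that $\mathcal{S}_\zeta(M)$ always has a canonical nonzero quotient coming from the Witten--Reshetikhin--Turaev TQFT. For the isomorphism $\mathcal{S}_{\zeta,[\rho]}(M)\cong\mathcal{S}_{\zeta,[\rho']}(M)$, the point is that the two central representations $\rho,\rho':\pi_1(M)\to\{\pm\mathds{1}\}\subset\SL_2$ differ by a homomorphism $\epsilon:\pi_1(M)\to\{\pm 1\}=H^1(M;\mathbb{Z}/2)$. Such an $\epsilon$ acts on $\mathcal{S}_\zeta(M)$ by the sign-change automorphism sending a skein $[T]$ to $\prod(\text{signs of }\epsilon\text{ on the components})\,[T]$; this is a well-defined algebra automorphism because the skein relations are homogeneous under counting intersection with a fixed $\mathbb{Z}/2$-cycle Poincar\'e dual to $\epsilon$. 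Under $\Psi^S$ this automorphism corresponds, on $\mathcal{S}_{+1}(M)\cong\mathcal{O}[X_{\SL_2}(M)]$, to the translation automorphism of the character variety that sends the class of the trivial representation to the class of $\epsilon$ (viewed in $\SL_2$ via $\pm\mathds{1}$), i.e. it carries $\mathfrak{m}_{[\rho]}$ to $\mathfrak{m}_{[\rho']}$. Since it also commutes with $Fr$ up to this same translation (the Frobenius image being built from $N$ parallel copies, $N$ odd, so the sign is $\epsilon^N=\epsilon$), it descends to an isomorphism of quotients $\mathcal{S}_{\zeta,[\rho]}(M)\xrightarrow{\ \cong\ }\mathcal{S}_{\zeta,[\rho']}(M)$.

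For the lower bound $\dim\mathcal{S}_{\zeta,[\rho]}(M)\geq 1$, by the isomorphism just established it suffices to treat one central $\rho$, say the trivial representation, so $[\rho_\Sigma]$ is the trivial character of $\pi_1(\Sigma_g)$. The idea is that $\mathcal{S}_\zeta(M)\otimes\mathbb{C}$, or rather its reduction $\mathcal{S}_{\zeta,[\rho]}(M)$, surjects onto the finite-dimensional WRT vector space $V(\Sigma_g)$ via the Heegaard decomposition $\varphi$: indeed $\mathcal{S}_\zeta(H_g)$ has a distinguished simple quotient $V(\Sigma_g)$ (the ``small'' quotient landing in the TQFT), these quotients are compatible with the $\mathcal{S}_\zeta(\Sigma_g)$-module structure on both sides, and the pairing $V(\Sigma_g)\otimes V(\Sigma_g)\to\mathbb{C}$ furnished by the TQFT is nonzero because $\dim V(\Sigma_g)>0$ for $N$ odd. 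Composing $\varphi$ with this pairing gives a nonzero linear functional on $\mathcal{S}_\zeta(M)$, and one checks it factors through $\mathcal{S}_{\zeta,[\rho]}(M)$ (the TQFT pairing kills the ideal generated by $Fr(\mathfrak{m}_{[\rho]})$ because a central/trivial Frobenius-transparent skein acts on $V(\Sigma_g)$ by a scalar that agrees with the corresponding evaluation of the character, up to the sign absorbed into $\Psi^S$). Hence $\mathcal{S}_{\zeta,[\rho]}(M)\neq 0$.

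The main obstacle is the second part: making precise the claim that the WRT pairing descends to a \emph{nonzero} functional on the reduced module $\mathcal{S}_{\zeta,[\rho]}(M)$, i.e. that the maximal ideal $\mathfrak{m}_{[\rho]}$ at a central representation really does act by zero on $V(\Sigma_g)$ in the appropriate sense. One must check that for $f\in\mathfrak{m}_{[\rho]}\subset\mathcal{S}_{+1}(M)\cong\mathcal{O}[X_{\SL_2}(M)]$, the element $Fr(f)$ acts on the (rank-$1$) image of $V(\Sigma_g)\otimes_{\ldots} V(\Sigma_g)\cong\mathbb{C}$ as multiplication by $f([\rho])=0$; this is the statement that Frobenius-transparent central skeins act on the TQFT module by their classical evaluation at the corresponding character, which should follow from the transparency property together with the fact that on $V(\Sigma_g)$ the central curves act by the known scalars of the WRT theory — but the bookkeeping of signs (spin structure $w_S$, the odd $N$-th power, and the framing/Bugadze factors in the TQFT normalization) is where care is genuinely needed. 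Everything else is formal: the sign automorphism argument for the first part, and for the lower bound the mere existence of the nonzero TQFT pairing, which is classical.
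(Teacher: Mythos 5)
Your proof follows the paper's argument: the first part is exactly the $\mathrm{H}^1(M;\mathbb{Z}/2\mathbb{Z})$ sign action on links, which is equivariant under $Fr\circ\Psi^S$ and transitive on central characters, hence descends to the reduced modules. The ``main obstacle'' you flag in the second part --- that the Witten--Reshetikhin--Turaev functional descends to a nonzero linear map on $\mathcal{S}_{\zeta,[\rho]}(M)$ --- is precisely the content of \cite[Proposition $2.4$]{DetcherryKalfagianniSikora_SkeinDim}, which the paper cites rather than reproves, so your argument is complete once you invoke that result instead of attempting the sign bookkeeping yourself.
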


\begin{proof} The skein module $\mathcal{S}_{\zeta}(M)$ admits a natural $\mathrm{H}^1(M; \mathbb{Z}/2\mathbb{Z})$ action defined for a class $\chi\in  \mathrm{H}^1(M; \mathbb{Z}/2\mathbb{Z})$ and a link $L=L_1\cup \ldots \cup L_n \subset M$ by $\chi \cdot [L]= (-1)^{\sum_i \chi([L_i])} [L]$. The map $Fr \circ \Psi^S : \mathcal{O}[X_{\SL_2}(M)] \to \mathcal{S}_{\zeta}(M)$ is clearly  $\mathrm{H}^1(M; \mathbb{Z}/2\mathbb{Z})$ equivariant, where $X_{\SL_2}(M)$ is equipped with the obvious action sending $[\rho]$ to the class of $\chi\cdot \rho : \gamma \mapsto (-1)^{\chi([\gamma])} \rho(\gamma)$. Since the action of $\mathrm{H}^1(M; \mathbb{Z}/2\mathbb{Z})$ on central representations is transitive, for $\rho$, $\rho'$ two central representations, we can find $\chi\in \mathrm{H}^1(M; \mathbb{Z}/2\mathbb{Z})$ with $\rho'=\chi\cdot \rho$ and the linear isomorphism $\chi\cdot \bullet : \mathcal{S}_{\zeta}(M) \to \mathcal{S}_{\zeta}(M)$ induces an isomorphism $\mathcal{S}_{\zeta, [\rho]}(M)\cong \mathcal{S}_{\zeta, [\rho']}(M)$.
\par When $\rho$ is central, the Witten-Reshetikhin-Turaev invariant defines a linear map $WRT_{\rho, \zeta} : \mathcal{S}_{\zeta, [\rho]}(M)\to \mathbb{C}$ which were proved to be surjective in \cite[Proposition $2.4$]{DetcherryKalfagianniSikora_SkeinDim}. Therefore $\mathcal{S}_{\zeta, [\rho]}(M)\neq 0$.

\end{proof}




\begin{theorem}\label{theorem_isom} 
If $X_{\SL_2}(M)$ is reduced, then  $\mathscr{L}^M \to X_{\SL_2}(M)$ is a line bundle. 
\end{theorem}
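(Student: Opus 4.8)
The plan is to reduce the general statement to the case of central representations, which is the only outstanding case, and then to combine the injectivity results of the previous subsections with the computation of $\mathscr{I}^M$ from Theorem \ref{theorem_Ibundle}. By Lemma \ref{lemma_hartshorne} it suffices to show that $\dim \mathcal{S}_{\zeta,[\rho]}(M)=1$ for every $[\rho]\in X_{\SL_2}(M)$; by Theorem \ref{theorem_FKLKK} this holds whenever $[\rho]$ is non-central, so I would fix a central representation $\rho$ and work near $[\rho]$.

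First I would record the two inequalities that bracket the dimension. On one side, Lemma \ref{lemma_preliminary} gives $\dim \mathcal{S}_{\zeta,[\rho]}(M)\geq 1$ (via surjectivity of the WRT functional). On the other side, I would use Lemma \ref{lemma_Reynolds2}: the inclusion $\iota$ induces an injection $\restriction{\iota}{\rho}: \mathcal{S}_{\zeta,[\rho]}(M)\hookrightarrow \mathcal{S}_{\zeta,\rho}(M^*)$, where here $\rho$ is chosen to be \emph{any} lift of $[\rho]$ to $R_{\SL_2}(M)$ — for a central $[\rho]$ the central representation $\rho$ itself is such a lift and satisfies $\pi(\rho)=[\rho]$. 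By Theorem \ref{theorem_Ibundle}, $\dim \mathcal{S}_{\zeta,\rho}(M^*)=1$ unconditionally (this does not require reducedness of $R_{\SL_2}(M)$, only the Azumaya computation at $\rho_\Sigma$, which holds because $\rho_\Sigma(\alpha_\partial)=\mathds{1}_2\in\SL_2^{(0)}$). Hence $\dim \mathcal{S}_{\zeta,[\rho]}(M)\leq 1$, and combined with the lower bound we get $\dim \mathcal{S}_{\zeta,[\rho]}(M)=1$ for central $[\rho]$ as well.

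Putting this together with Theorem \ref{theorem_FKLKK}, every fibre $\restriction{\mathscr{L}^M}{[\rho]}=\mathcal{S}_{\zeta,[\rho]}(M)$ is one-dimensional. Since $X_{\SL_2}(M)$ is assumed reduced (and Noetherian, being an affine scheme of finite type over $\mathbb{C}$) and $\mathscr{L}^M$ is coherent, Lemma \ref{lemma_hartshorne} applies directly and shows $\mathscr{L}^M\to X_{\SL_2}(M)$ is a line bundle. It is worth noting that, as a by-product, $\pi'\colon \pi^*\mathscr{L}^M\to \mathscr{I}^M$ becomes a morphism of line bundles which is fibrewise the identification $\mathbb{C}\xrightarrow{\cong}\mathbb{C}$ coming from $\restriction{\iota}{\rho}$, so one could alternatively phrase the argument as: $\pi^*\mathscr{L}^M\to\mathscr{I}^M$ is injective on fibres between a coherent sheaf and a line bundle of the same rank one, hence an isomorphism over the locus where this makes sense.

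The main subtlety — and the reason this is not entirely formal — is the interplay between the character scheme $X_{\SL_2}(M)$ and the representation scheme $R_{\SL_2}(M)$ in the use of Lemma \ref{lemma_Reynolds2}: one must be careful that the maximal ideal $\mathfrak{m}_{[\rho]}\subset\mathcal{S}_{+1}(M)$ used to form $\mathcal{S}_{\zeta,[\rho]}(M)$ pulls back along $i\colon \mathcal{S}_{+1}(M)\hookrightarrow\mathcal{S}_{+1}(M^*)$ from $\mathfrak{m}_\rho$, which is exactly the content of Lemma \ref{lemma_Reynolds} (the Reynolds operator), and that the chosen lift $\rho$ of a central class is legitimately a closed point of $R_{\SL_2}(M)$ mapping to $[\rho]$. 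Granting this, the upper bound $\dim\mathcal{S}_{\zeta,[\rho]}(M)\leq 1$ is immediate from Theorem \ref{theorem_Ibundle}, and the whole theorem follows; so the genuine work has already been done in establishing Theorem \ref{theorem_Ibundle} (via $\varphi_*$ and the Azumaya property of $\mathcal{S}_\zeta(\Sigma_g^*)$) and Lemma \ref{lemma_Reynolds2}.
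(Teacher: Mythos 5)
Your proposal is correct and follows essentially the same route as the paper: non-central classes are handled by Theorem \ref{theorem_FKLKK}, and for a central $[\rho]$ the dimension is squeezed between the lower bound from Lemma \ref{lemma_preliminary} and the upper bound from Lemma \ref{lemma_Reynolds2} combined with Theorem \ref{theorem_Ibundle}, after which Lemma \ref{lemma_hartshorne} concludes. The extra remarks on the lift of $[\rho]$ and on $\pi'$ are accurate but not needed beyond what the paper already does.
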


\begin{proof}
If $[\rho]$ is the class of a non central representation, then $\dim ( \mathcal{S}_{\zeta, [\rho]}(M)) =1$ by Theorem \ref{theorem_FKLKK}. If $[\rho]$ is the class of a central representation, then $\dim ( \mathcal{S}_{\zeta, [\rho]}(M)) \geq 1$ by Lemma \ref{lemma_preliminary} and $\dim ( \mathcal{S}_{\zeta, [\rho]}(M)) \leq \dim ( \mathcal{S}_{\zeta, \rho}(M^*))= 1$ by Lemma \ref{lemma_Reynolds2} and Theorem \ref{theorem_Ibundle}. So $\dim ( \mathcal{S}_{\zeta, [\rho]}(M)) = 1$ for all $[\rho]\in X_{\SL_2}(M)$ and Lemma \ref{lemma_hartshorne} implies that $\mathscr{L}^M \to X_{\SL_2}(M)$ is a line bundle. 

\end{proof}

\bibliographystyle{amsalpha}
\bibliography{biblio}

\end{document}